\newcommand{\Z}{\mathbb{Z}}
\newcommand{\pres}[2]{\langle {#1}\ |\ {#2} \rangle}
\newtheorem{theorem}{Theorem}
\newtheorem{lemma}[theorem]{Lemma}
\newtheorem{corollary}[theorem]{Corollary}
\newtheorem{maintheorem}{Theorem}
\numberwithin{theorem}{section}
\theoremstyle{definition}
\newtheorem{defn}[theorem]{Definition}
\newtheorem{remark}[theorem]{Remark}
\newtheorem{example}[theorem]{Example}
\begin{document}
\title{Generalized polygons and star graphs of cyclic presentations of groups}%
\author{Ihechukwu Chinyere and Gerald Williams\thanks{This work was supported by the Leverhulme Trust Research Project Grant RPG-2017-334.}}

\maketitle

\begin{abstract}
Groups defined by presentations for which the components of the corresponding star graph are the incidence graphs of generalized polygons are of interest as they are small cancellation groups that -- via results of Edjvet and Vdovina --  are fundamental groups of polyhedra with the generalized polygons as links and so act on Euclidean or hyperbolic buildings; in the hyperbolic case the groups are SQ-universal. A cyclic presentation of a group is a presentation with an equal number of generators and relators that admits a particular cyclic symmetry. We obtain a classification of the non-redundant cyclic presentations where the components of the corresponding star graph are generalized polygons. The classification reveals that both connected and disconnected star graphs are possible and that only generalized triangles (i.e.\,incidence graphs of projective planes) and regular complete bipartite graphs arise as the components. We list the presentations that arise in the Euclidean case and show that at most two of the corresponding groups are not SQ-universal (one of which is not SQ-universal, the other is unresolved). We obtain results that show that many of the SQ-universal groups are large.
\end{abstract}

\noindent \textbf{Keywords:} cyclically presented group, generalized polygon, projective plane, star graph, building, SQ-universal.

\noindent \textbf{MSCs:} 51E24, 05E18, 20F67, 20F05, 52B05, 57M07.

\section{Introduction}\label{sec:intro}

An $(m,k)$-special presentation is a group presentation in which the relators have length $k$ and whose star graph is the incidence graph of a generalized polygon~\cite{EdjvetVdovina}. (Definitions will be given in Section~\ref{sec:prelims}.) This generalizes the concept of special presentations (from~\cite{Howie89}) which corresponds to the case $(m,k)=(3,3)$, and so the star graph is a generalized triangle (or the incidence graph of a projective plane). Polygonal presentations (for pairs of natural numbers $\nu,k$) were introduced in~\cite{Vdovina02} (see also~\cite{Vdovina05}) as a tool for constructing polyhedra with specified links. The concept was generalized in~\cite{EdjvetVdovina}, where it was shown that a polyhedron (obtained by identifying edges of a set of $k$-gons) has $\nu$ vertices with links $\Gamma_0,\ldots ,\Gamma_{\nu-1}$ if and only if the polyhedron corresponds to a polygonal presentation for $\nu,k$ over these links and that, given a non-redundant $(m,k)$-special presentation with star graph $\Gamma$, there exists a polygonal presentation over $\Gamma$ whose corresponding polyhedron has $\Gamma$ as its link.

The $(3,3)$-special presentations whose star graph is the smallest generalized triangle (the Heawood graph) were classified in~\cite{EdjvetHowie}. It was shown in~\cite{Howie89} that for any prime power $q-1$ there is a $(3,3)$-special presentation whose star graph is the incidence graph of the Desarguesian projective plane over the Galois field of order $q-1$, and an example machine for their construction was provided. Polygonal presentations for $k=3$, $\nu=1$ and where $\Gamma$ is the smallest or second smallest generalized triangle were classified in~\cite{CMSZ1,CMSZ2} (where they are called triangular presentations). An example of a polygonal presentation for $k=3,\nu=1$ that corresponds to a non-Desarguesian projective plane (the Hughes plane) was given in~\cite{Radu}.
Polygonal presentations for $k=3,\nu=1$ and where $\Gamma$ is the smallest generalized quadrangle were obtained in~\cite{KangaslampiVdovina06}, and all such polygonal presentations were classified in~\cite{KangaslampiVdovinaIJAC,CarboneKangaslampiVdovina} (subgroups of the groups acting on the corresponding polyhedron were studied in \cite{KangaslampiVdovina17}). Burger-Mozes presentations furnish examples of $(2,4)$-special presentations. SQ-universality of groups defined by special presentations was studied in~\cite{EdjvetVdovina}, where the problem of when such groups are large was posed~\cite[Problem~2]{EdjvetVdovina}. In this article we generalize the concept of $(m,k)$-special presentations to $(m,k,\nu)$-special presentations by replacing the condition that the star graph is a generalized $m$-gon with the condition that it has $\nu\geq 1$ isomorphic components, each of which is a generalized $m$-gon. We give the first examples of $(m,k,\nu)$-special presentations with $\nu>1$ (that are not unions of $(m,k)$-special presentations), from which explicit examples of polygonal presentations for $\nu,k$ with $\nu>1$ can be readily obtained.

A cyclic presentation is a group presentation with an equal number of generators and relators that admits a particular cyclic symmetry and the corresponding group is a cyclically presented group (\cite{Johnson97}). A $(3,3,1)$-special cyclic presentation, whose star graph is the Heawood graph was found in~\cite{EdjvetHowie} and it was shown in~\cite[Theorem~3.5]{MohamedWilliams} and~\cite[Theorem~10]{HowieWilliams} that (up to relabelling of generators) this is the only $(3,3,1)$-special cyclic presentation. In this article we show that if a non-redundant cyclic presentation is $(m,k,\nu)$-special then $m=2$ or $3$; we classify the non-redundant $(3,k,\nu)$-special presentations in terms of perfect difference sets (in particular these only arise if the relators are positive or negative words), and we classify the non-redundant $(2,k,\nu)$-special presentations. Except in one unresolved case, we determine which of the groups defined by these presentations are SQ-universal.

We now describe the structure of this article.
In Section~\ref{sec:prelims} we give definitions, terminology and background on group presentations, the star graph, special presentations, and polygonal presentations.
In Section~\ref{sec:stargraphcyclicpresentation} we give examples of $(m,k,\nu)$-special cyclic presentations, prove a theorem concerning the structure of the star graph of a cyclic presentation, and obtain corollaries that are needed for later results; in particular, we show that if $P_n(w)$ is non-redundant and $(m,k,\nu)$-special, where $w$ is positive and $\nu>1$ then the corresponding group $G_n(w)$ is large.
In Section~\ref{sec:girth} we show that if $P_n(w)$ is a non-redundant cyclic presentation where $w$ has length at least~3 then the girth of its star graph is at most $8$ (with girth $>6$ only attainable if $w$ is a non-positive word of length~3) and show that if such a presentation is $(m,k,\nu)$-special then either $m=2$, or $m=3$ and $w$ is a positive or negative word.
In Section~\ref{sec:nonredundant3kalpha} we classify the non-redundant $(3,k,\nu)$-special cyclic presentations in terms of perfect difference sets, and in particular the $(3,3,\nu)$-special cyclic presentations, and we show that a group defined by a non-redundant $(3,k,\nu)$-special cyclic presentation is SQ-universal if and only if $(k,\nu)\neq (3,1)$ (and that there is precisely one such group that is not SQ-universal).
In Section~\ref{sec:nonredundant2kalpha} we classify the non-redundant $(2,k,\nu)$-special cyclic presentations, and in particular the $(2,4,\nu)$-special presentations; we show that, with one possible exception, the corresponding groups are SQ-universal, and we identify which of them define Burger-Mozes groups.

Many of the results from Sections~\ref{sec:stargraphcyclicpresentation},\ref{sec:nonredundant3kalpha},\ref{sec:nonredundant2kalpha} concerning cyclic presentations $P_n(w)$ will be expressed in terms of multisets of differences of subscripts in length 2 cyclic subwords of $w$. For convenience we define these multisets here:
\begin{equation}\label{eq:ABQ}
\left.
\begin{aligned}
\mathcal{A}&=\lbrace a~|~ x_{i}x_{i+a}^{-1},\ 0\leq a<n  ~~\text{is a cyclic subword of } w\rbrace,\\
\mathcal{B}&=\lbrace b~|~ x_{i}^{-1}x_{i+b},\ 0\leq b<n ~~\text{is a cyclic subword of } w\rbrace,\\
\mathcal{Q}&=\lbrace q~|~ x_{i}x_{i+q} ~\text{or}~ x_{i+q}^{-1}x_{i}^{-1},\ 0\leq q<n ~~\text{is a cyclic subword of } w\rbrace,\\
\mathcal{Q}^+&=\{ q~|\ x_ix_{i+q},\ 0\leq q<n~\mathrm{is~a~cyclic~subword~of}~w\},\\
\mathcal{Q}^-&=\{ q~|\ x_{i+q}^{-1}x_i^{-1},\ 0\leq q<n~\mathrm{is~a~cyclic~subword~of}~w\},
\end{aligned}
\right\}
\end{equation}
where the entries are taken mod~$n$.

\section{Preliminaries}\label{sec:prelims}

\subsection{Presentations of groups}\label{sec:wordsinpresentations}
A word $w$ in generators $x_0,\ldots, x_{n-1}$ is said to be \em positive \em (respectively \em negative\em) if all of the exponents of generators are positive (respectively negative). We shall say that $w$ is \em alternating \em if it has no cyclic subword of the form $(x_ix_j)^{\pm 1}$. A word $w=w(x_0,\ldots ,x_{n-1})$ is \em freely reduced \em if does not contain a subword of the form $x_ix_i^{-1}$ or $x_i^{-1}x_i$; it is \em cyclically reduced \em if all cyclic permutations of it are freely reduced. If $w$ is a cyclically reduced, non-empty, word in a free group $F(X)$ then the unique word $v\in F(X)$ such that $w=v^t$ with $t$ maximal is called the \em root \em of $w$. We shall write $l(w)$ to denote the length of $w$ in $F(X)$.

Following \cite{BogleyShift} given a group presentation $P=\pres{X}{R}$, an element $r\in R$ is said to be \em freely redundant \em if it is freely trivial or if there exists $s\in R$ such that $r$ and $s$ are distinct elements of the free group with basis $X$ and either  $r$ is freely conjugate to $s$ or to $s^{-1}$. A presentation is said to be \em redundant \em if it contains a freely redundant relator. The \em deficiency \em of the presentation $P$ is defined as $\mathrm{def}(P)=|X|-|R|$ and the \em deficiency \em of a group $G$, $\mathrm{def}(G)$, is defined to be the maximum of the deficiencies of all finite presentations defining $G$. A group $G$ is \em SQ-universal \em if every countable group embeds in a quotient of $G$ and it is \em large \em if it has a finite index subgroup that has a non-abelian free homomorphic image; every large group is SQ-universal \cite{PrideLargeness}.

\subsection{Cyclic presentations and cyclically presented groups}\label{sec:cyclicpresentations}

For a positive integer $n$, let $F_n$ be the free group with basis $X=\lbrace x_0, \ldots, x_{n-1}\rbrace$ and let $\theta: F_n\rightarrow F_n$ be the \em shift automorphism \em given by $\theta(x_i)=x_{i+1}$ with subscripts taken modulo $n$. If $w$ is a cyclically reduced word that represents an element in $F_n$ then the presentation
$$P_n(w)=\pres{x_0, \ldots, x_{n-1}}{w,\theta(w), \ldots, \theta^{n-1}(w)}$$
is called a \em cyclic presentation \em and the group $G_n(w)$ it defines is a \em cyclically presented group\em. Without loss of generality we may assume that the generator $x_0$ is a letter of $w$, and we make this assumption throughout this article. Then $P_n(w)$ is said to be \em irreducible \em if the greatest common divisor of $n$ and the subscripts of the generators that appear in $w$ is equal to 1~\cite{EdjvetIrreducibleCyclicPresentations}.

The shift automorphism $\theta$ of a cyclically presented group $G_n(w)$ has exponent $n$ and the resulting $\Z_n$-action on $G_n(w)$ determines the \em shift extension \em $E_n(w) = G_n(w) \rtimes_\theta \Z_n$, which admits a presentation of the form
\(E_n(W)=\pres{x,t}{t^n,W(x,t)} \)
where $W=W(x, t)$ is obtained by rewriting $w$ in terms of the substitutions $x_i=t^ixt^{-i}$, $0\leq i<n$ (see, for example, \cite[Theorem~4]{JWW}). Thus there is a retraction $\nu^0: E_n(W) \rightarrow \Z_n$ given by $\nu^0(t)=t$, $\nu^0(x)=t^0=1$ with kernel $G_n(w)$. Moreover, as shown in~\cite[Section~2]{BogleyShift}, there may be further retractions $\nu^f$ for certain values of $f$ ($0\leq f<n$). Specifically, by~\cite[Theorem~2.3]{BogleyShift} the kernel of a retraction $\nu^f:E_n(W) \rightarrow \Z_n$ given by $\nu^f(t)=t$, $\nu^f(x)=t^f$ is cyclically presented, generated by the elements $y_i=t^ixt^{-(i+f)}$ ($0\leq i<n$).

\subsection{Star graph}\label{sec:stargraph}

Let $P = \pres{X}{R}$ be a group presentation such that every relator $r\in R$ is a cyclically reduced word in the generators. Let $\tilde{R}$ denote the symmetrized closure of $R$, that is, the set of all cyclic permutations of elements in $R\cup R^{-1}$. The \em star graph \em of $P$ is the undirected vertex-labelled graph $\Gamma$ where the vertex set is in one-one correspondence with $X\cup X^{-1}$ and vertices are labelled by the corresponding element of $X\cup X^{-1}$ and where there is an edge joining vertices labelled $x$ and $y$ for each distinct word $xy^{-1}u$ in $\tilde{R}$~\cite[page~61]{LyndonSchupp}. Such words occur in pairs, that is $xy^{-1}u\in \tilde{R}$ implies that $yx^{-1}u^{-1}\in \tilde{R}$. These pairs are called \em inverse pairs \em and the two edges corresponding to them are identified in $\Gamma$. It follows that replacing any relator of a presentation by its root, or removing a redundant relator from a presentation, leaves the star graph unchanged. We refer to vertices in $X$ as \em positive \em vertices and vertices in $X^{-1}$ as \em negative \em vertices.

We now set out our graph theoretic terminology. Given a graph $\Gamma$ we write $V(\Gamma)$ to denote its vertex set. If $\Gamma$ is bipartite with vertex partition $V(\Gamma)=V_1\cup V_2$ where each edge connects a vertex in $V_1$ to a vertex in $V_2$ then $V_1,V_2$ are called the \em parts \em of $V(\Gamma)$. The \em neighbours \em of a vertex $v$, denoted by $N_\Gamma(v)$ is the set of all vertices that are adjacent to $v$ in $\Gamma$. A graph $\Gamma$ is \em $q$-regular \em if $|N_\Gamma(v)|=q$ for all $v\in V(\Gamma)$ and it is \em regular \em if it is $q$-regular for some~$q$. We allow graphs to have loops and to have more than one edge joining a pair of vertices.

A \em path \em of \em length \em $l$ in $\Gamma$ is a sequence of vertices $(u=u_0,u_1,\ldots ,u_l=v)$ with edges $u_i-u_{i+1}$ for each $0\leq i<l$; it is a \em closed path \em if $u=v$. The path is \em reduced \em if the edge $u_{i+1}-u_{i+2}$ is not equal to the edge $u_{i+1}-u_{i}$ ($0\leq i<l-1$). The \em distance \em  $d_\Gamma (u,v)$ between vertices $u,v$ of $\Gamma$ is $l\geq 0$ if there is a path of length $l$ from $u$ to $v$, but no shorter path, and $d_\Gamma(u,v)=\infty$ if there is no path from $u$ to $v$. The \em girth\em, $\mathrm{girth}(\Gamma)$ of a graph $\Gamma$ is the length of a reduced closed path of minimal length, if $\Gamma$ contains a reduced closed path, and $\mathrm{girth}(\Gamma)=\infty$ otherwise. The \em diameter\em, $\mathrm{diam}(\Gamma)$ of a graph $\Gamma$ is the greatest distance  between any pair of vertices of the graph (which may be infinite). If $\Gamma$ is a graph with finite girth then $\mathrm{girth}(\Gamma)\leq 2\mathrm{diam}(\Gamma)+1$.

\subsection{Special presentations}\label{sec:specialpresentation}

The concept of $(m,k)$-special presentations was introduced in~\cite{EdjvetVdovina}, generalizing the concept of special presentations, introduced in~\cite{Howie89} (which corresponds to the case $m=k=3$). We extend this to define $(m,k,\nu)$-special presentations, which reduces to $(m,k)$-special in the case $\nu=1$.

\begin{defn}\label{def:mkalphaspecialpres}
Let $m\geq 2, k\geq 3,\nu\geq 1$. A  finite group presentation $P = \pres{X}{R}$ is said to be \em $(m, k,\nu)$-special \em if the following conditions hold:
\begin{itemize}
  \item[(a)] the star graph $\Gamma$ of $P$ has $\nu$ isomorphic components, each of which is a connected, bipartite graph of diameter $m$ and girth $2m$ in which each vertex has degree at least $3$;
  \item[(b)] each relator $r\in R$ has length $k$;
  \item[(c)] if $m=2$ then $k\geq 4$.
\end{itemize}
\end{defn}

Note that the presentations considered in~\cite{EdjvetVdovina} are non-redundant; however, our definition of $(m,k,\nu)$-special (as with the definition of special in~\cite{Howie89}) does not require the presentation to be non-redundant. Note also that if a presentation is $(m,k,\nu)$-special then it has at least 3 generators and the relators are cyclically reduced (for otherwise the star graph contains loops, so is not bipartite). We reiterate and expand on some remarks from~\cite{EdjvetVdovina} concerning $(m,k,\nu)$-special presentations and their star graphs.

\begin{remark}\label{rem:mkalphaspecial}
Let $P$ be an $(m,k,\nu)$-special presentation with star graph $\Gamma$.
\begin{itemize}
  \item[1.] By~\cite[Lemma~1.3.6]{Maldeghem} condition (a) is equivalent to $\Gamma$ having $\nu$ isomorphic components, each of which is the incidence graph of a generalized $m$-gon and thus, by~\cite{FeitHigman}, $m\in \{2,3,4,6,8\}$. The incidence graph $\Lambda$ of a generalized 2-gon is a complete bipartite graph (\cite[page~11]{Maldeghem}, \cite[Section~A.1]{Kantor}) so if, in addition, $\Lambda$ is $k$-regular, then it is the complete bipartite graph $K_{k,k}$. The incidence graph of a generalized 3-gon (or projective plane) of order $q-1$ (for some $q\geq 3$) is bipartite, $q$-regular, each part has $q^2-q+1$ vertices and any two vertices from the same part have exactly one common neighbour (see, for example, \cite[page~373]{Biggs}, \cite[Section~A.1]{Kantor}). Moreover, if $\Lambda$ is a bipartite graph of girth greater than $2$, in which every vertex has degree at least 3 and every pair of vertices from the same part have exactly one common neighbour then $\Lambda$ has girth 6 and diameter~3. As described in~\cite{SingerFiniteProjGeom}, given a perfect difference set of order $q$, it is possible to construct a projective plane of order $q-1$. (A set of $k$ integers $d_1,\ldots ,d_k$ is called a \em perfect difference set \em (of order $k$) if among the $k(k-1)$ differences $d_i-d_j$ each of the residues $1,2,\ldots , (k^2-k)$~mod~$k^2-k+1$ occurs exactly once. For instance $\{1,2,4\}$ and $\{0,1,3,9\}$ are perfect difference sets; further examples can be found in~\cite[Section~7]{Stevenson}.)

  \item[2.]
      Since each vertex of $\Gamma$ has degree $>1$ each generator and its inverse is a piece and since $\Gamma$ has girth $>2$ there are no pieces of length 2 (see, for example, \cite[Section~5]{PrideStarComplexes}) and therefore $P$ satisfies the small cancellation condition $C(3)$~\cite[Chapter~V]{LyndonSchupp}.
\item[3.] By~\cite{HillPrideVella}, if a presentation satisfies $C(3)$ then the $T(q)$ condition is equivalent to the statement that its star graph does not contain any reduced closed path of length $l$ where $3\leq l<q$. Therefore the $(m,k,\nu)$-special presentation $P$ satisfies the small cancellation condition $C(k)-T(2m)$.
  \end{itemize}
\end{remark}

As in~\cite[Proof of Theorem~2]{EdjvetVdovina}, every group defined by an $(m,k,\nu)$-special presentation with $1/m+2/k<1$ is non-elementary hyperbolic (by~\cite[Corollary~4.1]{GerstenShortI},\cite{Collins73},\cite{EdjvetHowie}), and hence SQ-universal (by~\cite[Theorem~1]{OlshanskiiSQ},\cite[Th\'eor\`eme~3.5]{Delzant}). We shall refer to the cases $1/m+2/k=1$ (that is, the cases $(m,k)=(3,3)$ and $(m,k)=(2,4)$) as the \em Euclidean \em cases. It was shown in~\cite[Theorem~2]{EdjvetVdovina} that groups defined by non-redundant $(3,3,1)$-special presentations are just-infinite (and hence not SQ-universal) and that, by~\cite{Collins73}, groups defined by non-redundant $(2,4,1)$-special presentations contain a non-abelian free subgroup and that there are examples (from~\cite{RattaggiThesis,Rattaggi07}) of $(2,4,1)$-special presentations defining both SQ-universal and non-SQ-universal groups.

Since the direct product of two free groups $F_n\times F_n$ ($n\geq 2$) contains a finitely generated subgroup that has undecidable membership problem~\cite{Mihailova} (or see~\cite[IV, Theorem~4.3]{LyndonSchupp}), groups that contain $F_2\times F_2$ as a subgroup are of interest as they fail to satisfy certain properties, such as subgroup separability~\cite{Malcev} and coherence~\cite{Grunewald} and, moreover, can be considered to ``strongly fail'' to be hyperbolic~\cite{BigdelyWise}. Hyperbolic groups and groups defined by $C(3)-T(6)$ presentations do not contain $F_2\times F_2$ (\cite[Theorem~9.3.1]{Bigdely}) so if a group defined by a $(m,k,\nu)$-special presentation contains $F_2\times F_2$ then $(m,k)=(2,4)$. Groups defined by $(2,4,\nu)$-special presentations, however, can contain such a subgroup.

If $v$ is the root of a cyclically reduced word $w$ and $w=v^p$ then the star graph $\Gamma$ of $P_n(w)$ is equal to the star graph of $P_n(v)$, and if $v$ has length 2 then the vertices of $\Gamma$ have degree at most 2, so $P_n(w)$ is $(m,pk,\nu)$-special if and only if $P_n(v)$ is $(m,k,\nu)$-special. Thus, in classifying $(m,k,\nu)$-special cyclic presentations $P_n(w)$ we can assume that $w$ is not a proper power.

\subsection{Polygonal presentations}\label{sec:polygonalpresentations}

In~\cite{Vdovina02,EdjvetVdovina} \em $\lambda$-polygonal presentations \em were defined, and a process for obtaining a corresponding 2-complex $K$ from such a presentation was given; we refer the reader to~\cite{EdjvetVdovina} for the precise definition. In that article, a \em polyhedron \em is defined to be a closed, connected 2-complex $K$ obtained by identifying edges of a given set of $k$-gons ($k\geq 3$); if the 2-complex $K$ obtained from a $\lambda$-polygonal presentation $\mathcal{K}$ is a polyhedron, we say that $K$ \em corresponds \em to $\mathcal{K}$. Lemma~1 of~\cite{EdjvetVdovina} constructs a $\lambda$-polygonal presentation, and a polyhedron that corresponds to it, from any non-redundant $(m,k)$-special presentation. This can be readily extended to deal with $(m,k,\nu)$-special presentations, as in Lemma~\ref{lem:polygonalpresentation}, below. (The hypothesis that the presentation does not decompose as the disjoint union of two non-trivial sub-presentations ensures that the set of tuples $\mathcal{K}$ also does not properly decompose in such a manner, and so the resulting $2$-complex $K$ is connected.)

\begin{lemma}[{Compare~\cite[Lemma~1]{EdjvetVdovina}}]\label{lem:polygonalpresentation}
Let $P=\pres{X}{R}$ be a $(m,k,\nu)$-special presentation that does not decompose as the disjoint union of two non-trivial sub-presentations, and suppose that $\Gamma_0,\ldots ,\Gamma_{\nu-1}$ are the components of the star graph $\Gamma$ of $P$. Then there is a $\lambda$-polygonal presentation $\mathcal{K}$ over $\Gamma_0,\ldots ,\Gamma_{\nu-1}$ with corresponding polyhedron $K$ having $\nu$ vertices $v_0,\ldots ,v_{\nu-1}$ such that the link of $v_i$ is $\Gamma_i$ ($0\leq i< \nu$).
\end{lemma}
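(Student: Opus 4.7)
The plan is to mimic the construction in the proof of \cite[Lemma~1]{EdjvetVdovina}, which handles the $\nu=1$ case, with modifications to keep track of which component of $\Gamma$ each corner of each polygon belongs to. In that proof one assembles $|R|$ topological polygons (one $k$-gon $P_r$ per relator $r$, with sides cyclically labelled by the letters of $r$), glues sides according to the incidence data of $\tilde R$, collapses all resulting corners to a single vertex, and verifies that the link is $\Gamma$ and that the resulting $2$-complex is a closed, connected polyhedron. The accompanying $\lambda$-polygonal presentation $\mathcal K$ is the set of cyclic $k$-tuples recording, for each polygon, the vertices of $\Gamma$ traversed at its corners.

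In the general case $\nu\geq 1$ I would perform exactly the same construction of polygons, side-identifications and tuples $\mathcal K$, but colour each corner of $P_r$ by the component of $\Gamma$ containing the corresponding star-graph edge; recall that the corner between consecutive sides $y_i,y_{i+1}$ corresponds to the edge of $\Gamma$ joining $y_i$ and $y_{i+1}^{-1}$. Corners are then identified to a single vertex of $K$ precisely when they carry the same colour, producing $\nu$ vertices $v_0,\ldots,v_{\nu-1}$. The crux is the compatibility condition that any two corners forced to be identified by the side identifications must carry the same colour: if side $y$ of $P_r$ is glued to side $y^{-1}$ of $P_s$, then at each endpoint of the resulting edge of $K$ the two corners identified there carry star-graph edges sharing a common vertex (namely $y$ or $y^{-1}$), and hence lie in a common component of $\Gamma$. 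By construction, the link of $v_i$ is then precisely $\Gamma_i$.

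Closedness of $K$ (each edge bordered by exactly two polygonal sides) follows from the inverse-pair structure of $\tilde R$ exactly as in \cite[Lemma~1]{EdjvetVdovina}; connectedness of $K$ is a direct consequence of the hypothesis that $P$ does not decompose as a disjoint union of two non-trivial sub-presentations, since any such decomposition of $K$ would induce a corresponding decomposition of $P$. The main obstacle is the compatibility check in the previous paragraph, which is where the multi-component phenomenon enters; once it is dispatched, the remainder of the argument is a direct adaptation of the $\nu=1$ case.
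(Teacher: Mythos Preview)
Your proposal is correct and is precisely the ``ready extension'' of \cite[Lemma~1]{EdjvetVdovina} that the paper alludes to but does not write out; the paper gives no proof beyond the remark that the indecomposability hypothesis guarantees connectedness of $K$, which is exactly your final observation. The one point worth tightening is the compatibility check: at an endpoint of an edge of $K$ not just two but \emph{all} corners meeting there carry star-graph edges incident to the common vertex $y$ (respectively $y^{-1}$) of $\Gamma$, so they all lie in the same component---but your argument already contains this.
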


Let $P,K$ be as in Lemma~\ref{lem:polygonalpresentation}, let $\tilde{K}$ be the universal cover of $K$, and let $G$ be the group defined by~$P$. Then, as explained in~\cite{Vdovina05,EdjvetVdovina}, the following hold. The group $G$ is the fundamental group of $K$, so $G$ acts cocompactly on $\tilde{K}$. When equipped with the metric introduced in \cite[page~165]{BallmannBrin94}, $\tilde{K}$ is a complete metric space of non-positive curvature in the sense of~\cite{AlexandrovBusemann}, and it is a hyperbolic building if $1/m+2/k<1$ and a Euclidean building if $(m,k)=(2,4)$ or $(3,3)$ \cite{GaboriauPaulin,BallmannBrinOrbihedra,Barre}. Hence $G$ is non-hyperbolic if and only if $(m,k)=(3,3)$ or $(2,4)$ by~\cite[page~119]{GaboriauPaulin},\cite{Bridson}.

\section{Star graphs of cyclic presentations}\label{sec:stargraphcyclicpresentation}

The following example shows that $(m,k,\nu)$-special presentations are prevalent within the class of cyclic presentations.

\begin{example}\label{ex:specialcyclicpresentations}
 \begin{itemize}
   \item[(a)] $P_7(x_0x_1x_3)$ is $(3,3,1)$-special (\cite[Example~3.3]{EdjvetHowie}, \cite[Example~6.3]{Howie89}); its star graph is the Heawood graph (the incidence graph of a projective plane of order~2). The corresponding triangle presentation appears in~\cite[Section~4]{CMSZ1}, \cite[Section~4]{CMSZ2} and the group $G_7(x_0x_1x_3)$ also appears in~\cite[Section~3]{BBPV}, and the proof of~\cite[Theorem~2]{EdjvetVdovina}.
   In particular, it is known that $G_7(x_0x_1x_3)$ is just-infinite (so is not SQ-universal) and non-hyperbolic -- see~\cite[Example~3.8]{MohamedWilliams} for a discussion.

   \item[(b)] $P_{21}(x_0x_1x_5)$ is $(3,3,3)$-special; its star graph has 3 components, each of which is the Heawood graph -- see Figure~\ref{fig:threeHeawoods}. The group $G_{21}(x_0x_1x_5)$ is large (by~\cite[Lemma~2.3]{EdjvetWilliams}) and is not hyperbolic see~\cite[Corollary~2.10]{ChinyereWilliamsT6}. The free product of three copies of $G_7(x_0x_1x_3)$ is the group $G_{21}(x_0x_3x_9)$ and the shift extension of $G_{21}(x_0x_3x_9)$ is isomorphic to the shift extension of $G_{21}(x_0x_1x_5)$, so the structures of $G_7(x_0x_1x_3)$ and $G_{21}(x_0x_1x_5)$ are related through this shift-extension. For example, in~\cite[Corollary~2.10]{ChinyereWilliamsT6} the fact that $G_7(x_0x_1x_3)$ is non-hyperbolic is used to prove that $G_{21}(x_0x_1x_5)$ is non-hyperbolic.

   \item[(c)] $P_{13}(x_0^2x_1x_4)$ is $(3,4,1)$-special; its star graph is the  $(4,6)$-cage~\cite{Wong} (the incidence graph of a projective plane of order~3) -- see Figure~\ref{fig:(3,4,1)specialstargraph}.

   \item[(d)] $P_4(x_0x_1x_0^{-1}x_1^{-1})$ (defining $F_2\times F_2$) is $(2,4,1)$-special; its star graph is the complete bipartite graph $K_{4,4}$.

   \item[(e)] The presentation $P_7(x_0^2x_1 x_4^2x_5 x_1^2x_2 x_5^2x_6 x_2^2x_3 x_6^2x_0 x_3^2x_4)$ is (redundant and) $(3,21,1)$-special; its star graph is the Heawood graph.
 \end{itemize}
\end{example}

\begin{figure}
\begin{center}
\begin{tabular}{lll}
\begin{tikzpicture}%

\SetVertexNoLabel
\grHeawood[RA=2]
\AssignVertexLabel[size  = \footnotesize]{a}{$x_0$,$x_1^{-1}$,$x_{18}$,$x_{13}^{-1}$,$x_{12}$,$x_{16}^{-1}$,$x_{15}$,$x_{19}^{-1}$,$x_3$,$x_7^{-1}$,$x_6$,$x_{10}^{-1}$,$x_9$,$x_4^{-1}$}
\end{tikzpicture}
&
\begin{tikzpicture}%
\SetVertexNoLabel
\grHeawood[RA=2]
\AssignVertexLabel[size  = \footnotesize]{a}{$x_1$,$x_2^{-1}$,$x_{19}$,$x_{14}^{-1}$,$x_{13}$,$x_{17}^{-1}$,$x_{16}$,$x_{20}^{-1}$,$x_4$,$x_8^{-1}$,$x_7$,$x_{11}^{-1}$,$x_{10}$,$x_5^{-1}$}
\end{tikzpicture}
&
\begin{tikzpicture}%
\SetVertexNoLabel
\grHeawood[RA=2]
\AssignVertexLabel[size  = \footnotesize]{a}{$x_2$,$x_3^{-1}$,$x_{20}$,$x_{15}^{-1}$,$x_{14}$,$x_{18}^{-1}$,$x_{17}$,$x_{0}^{-1}$,$x_5$,$x_9^{-1}$,$x_8$,$x_{12}^{-1}$,$x_{11}$,$x_6^{-1}$}
\end{tikzpicture}
\end{tabular}

\end{center}
  \caption{The star graph of $P_{21}(x_0x_1x_5)$ (the disjoint union of three Heawood graphs).\label{fig:threeHeawoods}}
\end{figure}
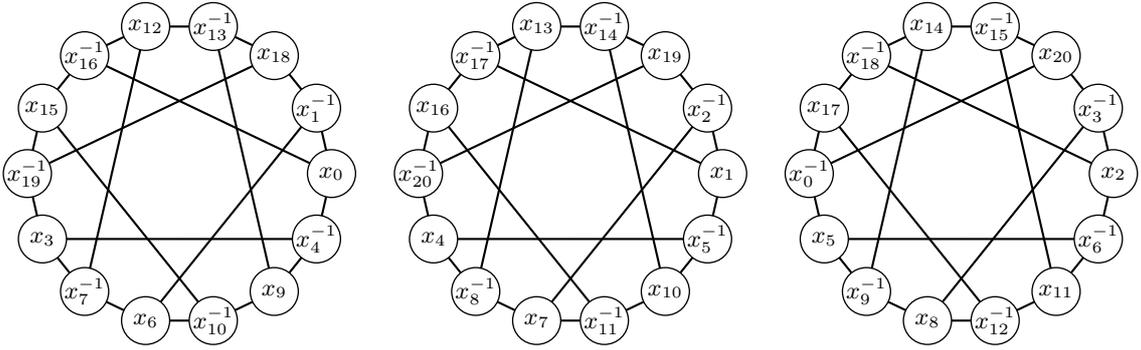

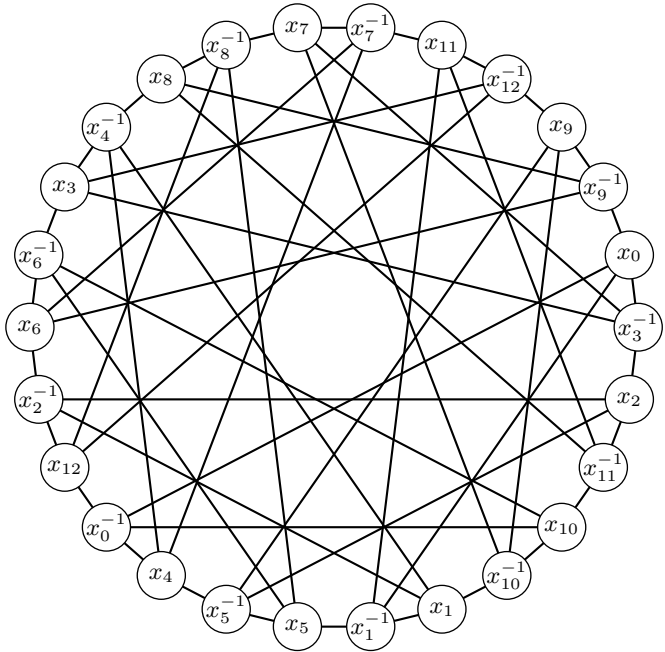
\begin{figure}
\begin{center}
\begin{tikzpicture}%
\SetVertexNoLabel
\grLCF[RA=4]{7,-11}{13}
\AssignVertexLabel[size  = \footnotesize]{a}{$x_3^{-1}$,$x_0$,$x_9^{-1}$,$x_9$,$x_{12}^{-1}$,$x_{11}$,$x_7^{-1}$,$x_7$,$x_8^{-1}$,$x_8$,$x_4^{-1}$,$x_3$,$x_6^{-1}$,$x_6$,$x_2^{-1}$,$x_{12}$,$x_0^{-1}$,$x_4$,$x_5^{-1}$,$x_5$,$x_1^{-1}$,$x_1$,$x_{10}^{-1}$,$x_{10}$,$x_{11}^{-1}$,$x_2$}
\end{tikzpicture}
\end{center}
  \caption{The star graph of $P_{13}(x_0^2x_1x_4)$ (the $(4,6)$-cage).\label{fig:(3,4,1)specialstargraph}}
\end{figure}

We use Example~\ref{ex:specialcyclicpresentations}(b) to give an example of a $\lambda$-polygonal presentation over a disconnected graph:

\begin{example}[A polygonal presentation over the union of three copies of the Heawood graph]\label{ex:polygonalpres3Heawoods}
Let $\Gamma$ be the graph in Figure~\ref{fig:threeHeawoods} and let $U_1=\{x_i\ |\ 0\leq i<21\}$, $U_2=\{x_i^{-1}\ |\ 0\leq i<21\}$, and $\lambda:U_1\rightarrow U_2$ be the bijection given by $\lambda(x_i)=x_i^{-1}$. Then the set
\[ \mathcal{K}=\{ (x_i,x_{i+1},x_{i+5}), (x_{i+1},x_{i+5},x_i), (x_{i+5}, x_i,x_{i+1}) \ |\ 0\leq i<21 \}\]
(subscripts mod~$21$) is a $\lambda$-polygonal presentation over $\Gamma$.
\end{example}

Let $\delta$ denote the greatest common divisor of $n$ and the subscripts of the generators that appear in $w$ (recall that by our standing assumption $x_0$ is involved in $w$). Then the cyclic presentation $P=P_n(w(x_0,x_\delta,\dots ,x_{(n/\delta-1)\delta}))$ decomposes as the disjoint union of $\delta$ cyclic presentations $R=P_{n/\delta}(w(x_0,x_1,\dots ,x_{n/\delta-1}))$. Hence the star graph of $P$ decomposes as the disjoint union of $\delta$ copies of the star graph of $R$, and so $P$ is $(m,k,\delta\nu)$-special if and only if $R$ is $(m,k,\nu)$-special. Thus, it is often convenient to assume that $P_n(w)$ is irreducible (i.e.\,that $\delta=1$).

The following theorem (compare~\cite[Lemma~2.3]{HowieWilliamsPlanarity}) describes the components of the star graph of a non-redundant cyclic presentation $P_n(w)$. For an integer $n\geq 2$ and subset $A\subseteq \{0,1,\ldots ,n-1\}$, the \em circulant graph \em $\mathrm{circ}_n(A)$ is the graph with vertices $v_0,\ldots ,v_{n-1}$ and edges $v_i-v_{i+a}$ for all $0\leq i<n$, $a\in A$ (subscripts mod~$n$).

\begin{theorem}\label{thm:components}
Let $\Gamma$ be the star graph of a non-redundant cyclic presentation $P_n(w)$ where $w$ is cyclically reduced and is not a proper power, let $\mathcal{A},\mathcal{B},\mathcal{Q}$ be the multisets defined at~(\ref{eq:ABQ}), let $d_\mathcal{A}=\mathrm{gcd}(n, a\ (a\in\mathcal{A}))$, $d_\mathcal{B}=\mathrm{gcd}(n, b\ (b\in\mathcal{B}))$, and if $w$ is non-alternating let $q_0\in\mathcal{Q}$ and set $d=\mathrm{gcd}(n,a\ (a\in \mathcal{A}),b\ (b\in \mathcal{B}), q-q_0\ (q\in \mathcal{Q}))$. Then $\Gamma$ is $l(w)$-regular and has vertices $x_i,x_i^{-1}$ ($0\leq i<n$) and edges $x_i-x_{i+a}, x_i^{-1}-x_{i+b}^{-1}, x_i-x_{i+q}^{-1}$ for all $a\in\mathcal{A}, b\in\mathcal{B}, q\in\mathcal{Q}$, $0\leq i<n$.
\begin{itemize}
  \item[(a)] If $w$ is non-alternating then $\Gamma$ has $d$ isomorphic connected components $\Gamma_0,\ldots ,\Gamma_{d-1}$ where for $0\leq j<d$ the graph $\Gamma_j$ is the induced labelled subgraph of $\Gamma$ with vertex set $V(\Gamma_j) = V(\Gamma_j^+) \cup V(\Gamma_j^-)$ where $\Gamma_j^+$ and $\Gamma_j^-$ are the induced labelled subgraphs of $\Gamma$ with vertex sets
\begin{alignat*}{1}
V(\Gamma_j^+)&= \{ x_{j+td}\ |\ 0\leq t< n/d\},\\
V(\Gamma_j^-)&= \{x_{j+q_0+td}^{-1}\ |\ 0\leq t< n/d\}
\end{alignat*}
(subscripts mod~$n$). In particular $|V(\Gamma_j^+)|=|V(\Gamma_j^-)|= n/d$ for all $0\leq j<d$ and the subscripts of the positive (respectively negative) vertices in any component are congruent mod~$d$.

\item[(b)] If $w$ is alternating then $\Gamma$ has $d_\mathcal{A} +d_\mathcal{B}$ connected components $\Gamma_0^+,\ldots ,\Gamma_{d_\mathcal{A}-1}^+$ and $\Gamma_0^-,\ldots ,\Gamma_{d_\mathcal{B}-1}^-$ which are, respectively, the induced labelled subgraphs of $\Gamma$ with vertex sets
\begin{alignat*}{1}
V(\Gamma_j^+)&= \{ x_{j+td_\mathcal{A}}\ |\ 0\leq t< n/d_\mathcal{A}\},\\
V(\Gamma_j^-)&= \{x_{j+td_\mathcal{B}}^{-1}\ |\ 0\leq t< n/d_\mathcal{B}\}
\end{alignat*}
(subscripts mod~$n$). Moreover
each graph $\Gamma_j^+$ is isomorphic to the circulant graph \linebreak $\mathrm{circ}_{n/d_\mathcal{A}}(\{a/d_\mathcal{A}\ (a\in\mathcal{A})\})$ and
each graph $\Gamma_j^-$ is isomorphic to the circulant graph \linebreak $\mathrm{circ}_{n/d_\mathcal{B}}(\{b/d_\mathcal{B}\ (b\in\mathcal{B})\})$.
\end{itemize}
\end{theorem}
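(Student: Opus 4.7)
The proof breaks into four pieces: identifying the edges, checking $l(w)$-regularity, and then the component descriptions in the two cases. Throughout I exploit the fact that the shift $\theta$ induces a graph automorphism of $\Gamma$ via $x_i\mapsto x_{i+1}$, $x_i^{-1}\mapsto x_{i+1}^{-1}$, and that non-redundancy together with $w$ not being a proper power ensures that the $n$ cyclic words $\theta^i(w)$ contribute pairwise non-equivalent elements to $\tilde R$, so that no edge is over-counted.

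To identify the edges I case-split on the signs of the two letters in a length-2 cyclic subword $ab$ of some $\theta^i(w)$: such a subword contributes an edge joining vertex $a$ to vertex $b^{-1}$, and the four sign-cases ($x_jx_k^{-1}$, $x_j^{-1}x_k$, $x_jx_k$, $x_j^{-1}x_k^{-1}$) produce exactly the edges indexed by $\mathcal{A}$, $\mathcal{B}$, $\mathcal{Q}^+$ and $\mathcal{Q}^-$ respectively, for all $0\leq i<n$. For $l(w)$-regularity I count $\deg(x_0)$: every positive occurrence of $x_0$ across the $n$ shifts starts a unique length-2 cyclic subword contributing an edge at $x_0$, while every negative occurrence of $x_0^{-1}$ ends such a subword contributing another. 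A shift-orbit count shows these amount to $e^+$ and $e^-$ respectively, where $e^+$ and $e^-$ are the numbers of positive and negative letter occurrences in $w$, giving $e^++e^-=l(w)$; the analogous argument produces $\deg(x_0^{-1})=l(w)$, and the rest of the regularity then follows by shift-transitivity.

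For the non-alternating case, fix $q_0\in\mathcal{Q}$ and track which vertices are reachable from $x_0$ by walking along edges. An $\mathcal{A}$-edge changes a positive subscript by $\pm a$; a $\mathcal{B}$-edge changes a negative subscript by $\pm b$; and two successive $\mathcal{Q}$-edges (out to a negative vertex and back to a positive one) change a positive subscript by $q-q'$ for some $q,q'\in\mathcal{Q}$. The set of attainable positive subscripts therefore forms a coset of the subgroup of $\Z_n$ generated by $\mathcal{A}\cup\mathcal{B}\cup\{q-q_0\mid q\in\mathcal{Q}\}$, which by definition of $d$ equals $d\Z_n$ and is independent of the choice of $q_0$, since any two choices differ by an element of the generating set. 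A single $\mathcal{Q}$-edge then pins every negative vertex in the component of $x_j$ to a subscript of the form $j+q_0+td$, giving the stated vertex sets; the automorphism $\theta$ cyclically permutes the $d$ components $\Gamma_0,\dots,\Gamma_{d-1}$, establishing isomorphism.

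The alternating case is parallel and simpler: since $\mathcal{Q}=\emptyset$, the positive and negative halves of $\Gamma$ are not joined by any edge and are analysed independently, producing $d_\mathcal{A}$ components on the positive side and $d_\mathcal{B}$ on the negative. The relabelling $v_t=x_{j+td_\mathcal{A}}$ identifies each positive component with $\mathrm{circ}_{n/d_\mathcal{A}}(\{a/d_\mathcal{A}\mid a\in\mathcal{A}\})$, and analogously on the negative side. The delicate step, common to both cases, is verifying that the subgroup generated by the edge-difference elements is exactly the claimed one and that the listed vertex sets are not merely unions of components but are themselves the components; I would settle this by checking that every edge of $\Gamma$ has endpoints whose subscripts lie in the stated residue class, so components cannot be strictly larger than the listed sets, while the explicit generators above exhibit paths showing that the listed sets are connected.
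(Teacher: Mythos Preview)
Your argument is correct and matches the paper's strategy closely. The one notable variation is in part~(a): you identify the component of $x_0$ by directly computing the subgroup of $\Z_n$ generated by the edge-difference set $\mathcal{A}\cup\mathcal{B}\cup\{q-q_0:q\in\mathcal{Q}\}$, whereas the paper reaches the same gcd conclusion by contracting the edges $x_{j+td}-x_{j+q_0+td}^{-1}$ of $\Gamma_j$ to form a circulant graph $\Lambda_j$ on $n/d$ vertices and invoking the standard connectivity criterion for circulants; both routes amount to the same computation, and for the regularity claim the paper counts edges globally ($n\cdot l(w)$ edges in a regular graph on $2n$ vertices) rather than counting the degree at $x_0$ directly as you do.
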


\begin{proof}
Observe first that each positive vertex has the same degree and, since in the star graph of any finite presentation vertices corresponding to a generator and its inverse have the same degree (see~\cite[Section~2.3.3]{HillPrideVella}), the graph $\Gamma$ is regular. Moreover, the number of edges of the star graph of a non-redundant presentation that has no proper power relators, and where the relators are cyclically reduced, is equal to the sum of the lengths of the relators, so the number of edges of $\Gamma$ is equal to $nl(w)$, and hence $\Gamma$ is $l(w)$-regular.

Let $\Gamma^+$, $\Gamma^-$ denote the induced subgraphs of $\Gamma$ with positive and negative vertices, respectively. Then $\Gamma^+$ has vertices $x_0,\ldots,x_{n-1}$ and edges $x_i-x_{i+a}$ for each $a\in\mathcal{A}$, $0\leq i<n$, so is the circulant graph $\mathrm{circ}_n(\mathcal{A})$. Therefore its connected components are the graphs $\Gamma_0^+,\ldots ,\Gamma_{d_\mathcal{A}-1}^+$, each of which is isomorphic to the circulant graph $\mathrm{circ}_{n/d_\mathcal{A}}(\{a/d_\mathcal{A}\ (a\in\mathcal{A})\})$ (see, for example, \cite{BoeschTindell},\cite[page~154]{Heuberger}). Similarly $\Gamma^-$ is the circulant graph  $\mathrm{circ}_n(\mathcal{B})$, and its connected components are the graphs $\Gamma_0^-,\ldots ,\Gamma_{d_\mathcal{B}-1}^-$, each of which is isomorphic to the circulant graph $\mathrm{circ}_{n/d_\mathcal{B}}(\{b/d_\mathcal{B}\ (b\in\mathcal{B})\})$. Thus part~(b) is proved so consider part~(a). Fix a $j$, $0\leq j<d$ and consider the graph $\Gamma_j$. Identifying the endpoints of the edges $x_{j+td}-x_{j+q_0+td}^{-1}$ ($0\leq t<n/d$) of $\Gamma_j$ leaves the circulant graph $\Lambda_j$ with vertices $x_j,x_{j+d},\ldots , x_{j+n-d}$ and edges $x_{j+td}-x_{j+td}$, $x_{j+td}-x_{j+td+a}$, $x_{j+td}-x_{j+td+b}$, $x_{j+td}-x_{j+ ((q-q_0)/d+t)d }$   for all $a\in \mathcal{A}, b\in \mathcal{B}, q\in \mathcal{Q}$,  $0\leq t<n/d$ (subscripts mod~$n$). Setting $u_t=x_{j+td}$ for each $0\leq t<n/d$ the graph $\Lambda_j$ has vertices $u_0,\ldots , u_{n/d-1}$ and (multi-)edges $u_t-u_{t}, u_t-u_{t+a/d}, u_t-u_{t+b/d}, u_t-u_{t+(q-q_0)/d}$ (subscripts mod~$n/d$), which is connected since $\mathrm{gcd}(n/d,a/d\ (a\in \mathcal{A}),b/d\ (b\in \mathcal{B}), (q-q_0)/d\ (q\in \mathcal{Q}))=1$. Therefore $\Lambda_j$, and hence $\Gamma_j$, is connected, as required.
\end{proof}

As an immediate corollary we have:

\begin{corollary}\label{cor:mkalphacharacterisation}
Let $P_n(w)$ be a non-redundant cyclic presentation in which $w$ is not a proper power. Then
\begin{itemize}
  \item[(a)] $P_n(w)$ is $(3,k,\nu)$-special if and only if $k^2-k+1=n/\nu$ and each component of its star graph is the incidence graph of a projective plane of order $k-1$;
 \item[(b)] $P_n(w)$ is $(2,k,\nu)$-special if and only if $k=n/\nu$ and each component of its star graph is the complete bipartite graph $K_{k,k}$.
  \end{itemize}
\end{corollary}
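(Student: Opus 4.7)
The plan is to derive both parts directly from Theorem~\ref{thm:components} combined with Remark~\ref{rem:mkalphaspecial}(1), which catalogues the structural properties of incidence graphs of generalized polygons. Beyond these ingredients, the argument is essentially a vertex count.

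For the forward implication, suppose $P_n(w)$ is $(m,k,\nu)$-special with $m\in\{2,3\}$. Condition~(b) of Definition~\ref{def:mkalphaspecialpres} gives $l(w)=k$, and Theorem~\ref{thm:components} shows that the star graph $\Gamma$ is $l(w)$-regular on $2n$ vertices; in particular each of the $\nu$ isomorphic components of $\Gamma$ is $k$-regular. By Remark~\ref{rem:mkalphaspecial}(1), each component is the incidence graph of a generalized $m$-gon, which for $m=3$ is a projective plane of order $k-1$ and for $m=2$ (being both $k$-regular and complete bipartite) is $K_{k,k}$. Counting vertices yields $2n=2\nu(k^2-k+1)$ in case~(a) and $2n=2\nu k$ in case~(b), whence $n/\nu=k^2-k+1$ and $n/\nu=k$ respectively.

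For the converse, assume the arithmetic relation on $n/\nu$ holds and that each component of $\Gamma$ is the incidence graph of a projective plane of order $k-1$ (case (a)) or is $K_{k,k}$ (case (b)). Remark~\ref{rem:mkalphaspecial}(1) tells us that in case (a) each component is bipartite, connected, $k$-regular (with $k\geq 3$), has diameter $3$ and girth $6$; and that in case (b) each component is bipartite, connected, $k$-regular, has diameter $2$ and girth $4$ (the required $k\geq 4$ being tacit in the phrase ``$(2,k,\nu)$-special''). The $l(w)$-regularity of $\Gamma$ asserted by Theorem~\ref{thm:components} forces $l(w)=k$, and the arithmetic hypothesis matches the total vertex count $2n$ with $\nu$ copies of the relevant incidence graph, ensuring precisely $\nu$ components. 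All conditions of Definition~\ref{def:mkalphaspecialpres} therefore hold.

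The only point requiring care is mutual isomorphism of the $\nu$ components, demanded by Definition~\ref{def:mkalphaspecialpres}(a). When $w$ is non-alternating this is automatic from Theorem~\ref{thm:components}(a). In the alternating case covered by Theorem~\ref{thm:components}(b) both families $\Gamma_0^+,\ldots,\Gamma_{d_\mathcal{A}-1}^+$ and $\Gamma_0^-,\ldots,\Gamma_{d_\mathcal{B}-1}^-$ are separately isomorphic as circulant graphs, and the hypothesis that each component realizes the same target graph supplies the cross-family isomorphism. The main obstacle is therefore little more than bookkeeping, the substance residing entirely in Theorem~\ref{thm:components} and Remark~\ref{rem:mkalphaspecial}(1).
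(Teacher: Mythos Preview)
Your proposal is correct and takes essentially the same approach as the paper, which offers no proof beyond the phrase ``As an immediate corollary we have''; the intended argument is exactly the one you give---$l(w)$-regularity on $2n$ vertices from Theorem~\ref{thm:components}, the structural description of generalized $2$- and $3$-gon incidence graphs from Remark~\ref{rem:mkalphaspecial}(1), and a vertex count. Your explicit treatment of component-isomorphism in the alternating case is more than the paper supplies.
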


We also have the following:

\begin{corollary}\label{cor:oddnnotspecial}
Suppose that $P_n(w)$ is a non-redundant $(m,k,\nu)$-special cyclic presentation in which $w$ is non-positive, non-negative, and non-alternating, and is not a proper power. Then $n/\nu$ is even and, in particular, $P_n(w)$ is not $(3,k,\nu)$-special for any $k\geq 3$, $\nu\geq 1$.
\end{corollary}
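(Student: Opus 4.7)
The plan is to apply Theorem~\ref{thm:components}(a) and then exploit the cyclic shift symmetry of each component together with its bipartite structure to extract a parity constraint on $n/\nu$.

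First, since $w$ is non-alternating, Theorem~\ref{thm:components}(a) applies and the star graph $\Gamma$ has $d$ connected components, so $d=\nu$. Next, since $w$ is both non-positive and non-negative, reading $w$ cyclically forces at least one positive-to-negative transition, so $\mathcal{A}\neq\emptyset$. By Theorem~\ref{thm:components}(a), each $\mathcal{A}$-edge $x_i-x_{i+a}$ (with $a\in\mathcal{A}$) lies entirely inside $V(\Gamma_j^+)$ for the component $\Gamma_j$ containing $x_i$, because $d\mid a$.

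Now I would introduce the shift map $\sigma:x_i\mapsto x_{i+d},\ x_i^{-1}\mapsto x_{i+d}^{-1}$ (subscripts mod $n$). Using the explicit description of $V(\Gamma_j^+)$ and $V(\Gamma_j^-)$ in Theorem~\ref{thm:components}(a), and the fact that $d$ divides each element of $\mathcal{A}$, $\mathcal{B}$, and each $q-q_0$ for $q\in\mathcal{Q}$, one checks that $\sigma$ restricts to a graph automorphism of each $\Gamma_j$ of order exactly $n/d$, acting transitively on $V(\Gamma_j^+)$ (and on $V(\Gamma_j^-)$).

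The key step is to argue that $\sigma$ must swap the two parts of the bipartition of $\Gamma_j$. Since $\Gamma_j$ is connected and bipartite (being the incidence graph of a generalized $m$-gon), it admits a unique bipartition $(V_1,V_2)$, and any automorphism of $\Gamma_j$ either preserves this partition or interchanges $V_1$ and $V_2$. If $\sigma$ preserved $(V_1,V_2)$, then $V(\Gamma_j^+)\cap V_1$ and $V(\Gamma_j^+)\cap V_2$ would both be $\sigma$-invariant subsets of $V(\Gamma_j^+)$; transitivity of $\sigma$ on $V(\Gamma_j^+)$ would then force $V(\Gamma_j^+)\subseteq V_i$ for some $i$. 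But the $\mathcal{A}$-edges are edges within $V(\Gamma_j^+)$, contradicting bipartiteness. Hence $\sigma$ swaps $V_1$ and $V_2$; since the $\sigma$-orbit of any vertex then alternates between $V_1$ and $V_2$ and closes up after $n/d$ steps, $n/d$ must be even, i.e.\ $n/\nu$ is even.

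For the ``in particular'' clause, if $P_n(w)$ were $(3,k,\nu)$-special then by Corollary~\ref{cor:mkalphacharacterisation}(a) we would have $n/\nu=k^2-k+1$, which is always odd since $k(k-1)$ is even, contradicting the evenness of $n/\nu$ just established. I expect the main (only) obstacle to be the bookkeeping confirming that the positive-to-positive $\mathcal{A}$-edges genuinely sit inside $V(\Gamma_j^+)$ and that $\sigma$ restricts to each component rather than permuting components --- both follow directly from the divisibility relations defining $d$ in Theorem~\ref{thm:components}.
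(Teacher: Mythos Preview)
Your argument is correct, but it takes a more structural route than the paper's proof. The paper simply picks any $a\in\mathcal{A}$, sets $r=\gcd(n/\nu,a)$, and observes that
\[
x_0 - x_a - x_{2a} - \cdots - x_{((n/\nu)/r-1)a} - x_0
\]
is a closed path in $\Gamma$ consisting entirely of positive vertices; bipartiteness forces its length $(n/\nu)/r$ to be even, and since $r\mid n/\nu$ this immediately gives $n/\nu$ even. The ``in particular'' clause is handled identically to yours via Corollary~\ref{cor:mkalphacharacterisation}(a).

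Your approach instead passes through the shift automorphism $\sigma$ and the dichotomy that an automorphism of a connected bipartite graph either preserves or swaps the bipartition; the existence of $\mathcal{A}$-edges rules out the preserving case, and the swapping case forces the orbit length $n/\nu$ to be even. This is perfectly sound and arguably more conceptual --- it explains \emph{why} the parity constraint appears (the shift by $d$ interchanges the two colour classes) --- whereas the paper's argument is shorter and needs no discussion of automorphisms or uniqueness of bipartitions. Note that your argument implicitly contains the paper's: the cycle the paper writes down is just a $\langle\sigma^{a/d}\rangle$-orbit together with the $\mathcal{A}$-edges joining consecutive points.
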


\begin{proof}
Since $w$ is non-alternating, in the notation of Theorem~\ref{thm:components}, $\nu=d$ so $n/\nu$ is an integer. Let $a\in \mathcal{A}$ (which is non-empty, since $w$ is non-positive and non-negative) and let $r=\mathrm{gcd}(n/\nu,a)$. Then $x_0-x_a-x_{2a}-\cdots -x_{((n/\nu)/r-1)a}-x_0$ is a closed path in the star graph $\Gamma$ of $P_n(w)$ of length $(n/\nu)/r$, which is even, since each component of $\Gamma$ is bipartite. Hence $n/\nu$ is even. If $P_n(w)$ is $(3,k,\nu)$-special for some $k\geq 3$, $\nu\geq 1$ then by Corollary~\ref{cor:mkalphacharacterisation}(a) $n/\nu=k^2-k+1$, which is odd, a contradiction.
\end{proof}

\begin{corollary}\label{cor:disconnectedlarge}
Let $P_n(w)$ be a non-redundant cyclic presentation and let $\Delta$ be the number of components of the star graph of $P_n(w)$ and let $\sigma$ be the exponent sum of $w$. If either
\begin{itemize}
  \item[(a)] $w$ is non-alternating and $\Delta>1$ and $(\Delta,|\sigma|)\neq (2,2)$; or
  \item[(b)] $w$ is alternating and $\Delta>2$
\end{itemize}
then $G_n(w)$ is large. In particular, if $P_n(w)$ is $(m,k,\nu)$-special where $w$ is a positive word and $\nu>1$, then $G_n(w)$ is large.
\end{corollary}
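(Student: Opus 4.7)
The strategy is to use the disconnection of the star graph of $P_n(w)$ to construct a surjective homomorphism from $G_n(w)$ onto a smaller cyclically presented group whose largeness can be established directly. The argument essentially repackages the techniques of \cite[Lemma~2.3]{EdjvetWilliams}, which was already invoked in Example~\ref{ex:specialcyclicpresentations}(b).

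Consider first case~(a): $w$ non-alternating and $\Delta=d>1$. By Theorem~\ref{thm:components}(a), the positive vertices of each component of the star graph have subscripts in a single residue class modulo~$d$. This suggests the folding homomorphism $\phi\colon F_n\to F_d$ defined by $x_i\mapsto y_{i\bmod d}$. Since $\phi\circ\theta=\theta_d\circ\phi$ (where $\theta_d$ is the shift automorphism of $F_d$), the relators $\theta^i(w)$ of $P_n(w)$ map onto the $d$ shifts $\theta_d^{i\bmod d}(\bar w)$ of $\bar w:=\phi(w)$, each occurring $n/d$ times. Hence $\phi$ descends to a surjection $G_n(w)\twoheadrightarrow G_d(\bar w)$, and it suffices to prove $G_d(\bar w)$ is large. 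When $d\geq 3$ one analyses $G_d(\bar w)$ by eliminating a generator via $\bar w=1$ and verifying that the remaining shifted relators yield a presentation of deficiency at least~$2$, so largeness follows by the Baumslag--Pride theorem. When $d=2$ the deficiency drops to~$1$ and a separate argument is required; the excluded case $(\Delta,|\sigma|)=(2,2)$ is exactly where the construction can collapse to a finite group (e.g.\ $\bar w=y_0y_1$ gives $G_2(\bar w)\cong\Z_2$).

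Case~(b), $w$ alternating with $\Delta=d_\mathcal{A}+d_\mathcal{B}>2$, is analogous: Theorem~\ref{thm:components}(b) partitions the positive and negative generators separately into arithmetic progressions with common differences $d_\mathcal{A}$ and $d_\mathcal{B}$, so one folds the two sides independently with those moduli. The inequality $d_\mathcal{A}+d_\mathcal{B}>2$ ensures the resulting folded presentation has at least three generators, allowing the same largeness argument to go through.

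For the ``in particular'' clause, assume $P_n(w)$ is $(m,k,\nu)$-special with $w$ positive and $\nu>1$. By Definition~\ref{def:mkalphaspecialpres}, $k\geq 3$, so $w$ has length at least~$3$; being positive, $w$ contains a cyclic subword of the form $x_ix_j$ and hence is non-alternating. The $(m,k,\nu)$-special hypothesis asserts that the star graph has $\nu$ components, so $\Delta=\nu>1$, while $|\sigma|=l(w)=k\geq 3$ forces $(\Delta,|\sigma|)\neq(2,2)$; case~(a) therefore applies and $G_n(w)$ is large. The main obstacle to executing this plan is the $d=2$ subcase of part~(a) (and the analogous boundary cases of part~(b)): deficiency alone only guarantees deficiency~$1$ there, and one must exploit the hypothesis $|\sigma|\neq 2$ to rule out the small finite and virtually abelian quotients that can otherwise arise.
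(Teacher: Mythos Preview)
Your folding map $\phi\colon x_i\mapsto y_{i\bmod d}$ and the resulting surjection $G_n(w)\twoheadrightarrow G_d(\bar w)$ are exactly what the paper uses, and your derivation of the ``in particular'' clause is correct. The gaps lie in how you propose to analyse the quotient.

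In case~(a), the assertion that eliminating a generator from $G_d(\bar w)$ yields a presentation of deficiency at least~$2$ is wrong in general: $P_d(\bar w)$ has $d$ generators and $d$ relators, and a Tietze elimination preserves deficiency, so you remain at deficiency~$0$ throughout. This is not a boundary phenomenon confined to $d=2$; Baumslag--Pride simply does not apply. What the paper does instead is compute $\bar w$ explicitly. Since every $a\in\mathcal{A}$ and $b\in\mathcal{B}$ is $\equiv 0\pmod d$ and every $q\in\mathcal{Q}$ is $\equiv q_0\pmod d$, each alternating subword of $w$ collapses to a cancelling pair under $\phi$, and (after inverting $w$ if necessary so that $\sigma\ge 0$) one obtains $\bar w=y_0y_{q_0}\cdots y_{(\sigma-1)q_0}$, a positive word of length $\sigma$. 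The shift extension $G_d(\bar w)\rtimes\Z_d$ is then $\langle x,t\mid t^d,(xt^{q_0})^\sigma\rangle\cong\Z_d*\Z_\sigma$, which is large whenever $\sigma\neq 1$ and $(d,\sigma)\neq(2,2)$; since $G_d(\bar w)$ has finite index in this free product, it is large too.

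In case~(b), ``folding the two sides independently with moduli $d_\mathcal{A}$ and $d_\mathcal{B}$'' does not define a group homomorphism: the star-graph vertices $x_i$ and $x_i^{-1}$ come from the \emph{same} generator $x_i$, so you cannot route an occurrence of $x_i$ to different targets depending on its sign, and there is no quotient of $G_n(w)$ with $d_\mathcal{A}+d_\mathcal{B}$ generators arising this way. The paper's argument here is actually simpler than in case~(a): since $\Delta=d_\mathcal{A}+d_\mathcal{B}>2$, one of $d_\mathcal{A},d_\mathcal{B}$ (say $d_\mathcal{A}$) is at least~$2$; folding mod~$d_\mathcal{A}$ sends each pair $x_jx_{j+a}^{-1}$ in the alternating word $w$ to $y_{j'}y_{j'}^{-1}$, so $\bar w$ is freely trivial and $G_n(w)$ surjects directly onto the free group of rank $d_\mathcal{A}\ge 2$.
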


\begin{proof}
The values of $\sigma$ and $\Delta$ do not depend on whether or not $w$ is cyclically reduced, so we may assume that it is. Moreover, if $v$ is the root of $w$ then if $G_n(v)$ is large then so is $G_n(w)$, and the star graph of $P_n(w)$ is equal to that of $P_n(v)$ so we may assume that $w$ is not a proper power.

Suppose first that $w$ is alternating. Then, in the notation of Theorem~\ref{thm:components}, $d_\mathcal{A}>1$ or $d_\mathcal{B}>1$. By adjoining the relators $x_ix_{i+d_\mathcal{A}}^{-1}$ ($0\leq i<n)$ we see the group $G_n(w)$ maps onto the free group of rank $d_\mathcal{A}$, and adjoining the relators $x_ix_{i+d_\mathcal{B}}^{-1}$ ($0\leq i<n)$ it maps onto the free group of rank $d_\mathcal{B}$, and hence $G_n(w)$ is large.

Suppose then that $w$ is not alternating, so that (in the notation of Theorem~\ref{thm:components}) $\Delta=d$. By inverting and cyclically permuting $w$, if necessary, we may assume that the exponent sum $\sigma \geq 0$ and the initial letter of $w$ is $x_0$. Now $a\equiv 0,b\equiv 0, q\equiv q_0$~mod~$d$ for all $a\in \mathcal{A}, b\in \mathcal{B}, q\in \mathcal{Q}$, so if $x_i^{\epsilon_i}ux_j^{\epsilon_j}$ is a cyclic subword of $w$, where $u$ is alternating and $\epsilon_i,\epsilon_j\in\{\pm 1\}$ then $i\equiv j$~mod~$d$ if $\epsilon_i=-\epsilon_j$, $j\equiv i+q_0$~mod~$d$ if $\epsilon_i=\epsilon_j=1$, and  $j\equiv i-q_0$~mod~$d$ if $\epsilon_i=\epsilon_j=-1$.
Therefore, by adjoining the relators $x_ix_{i+d}^{-1}$ ($0\leq i<n)$ the group $G_n(w)$ maps onto  $G_d(w')$, where $w'=x_0x_{q_0}\ldots x_{(\sigma-1)q_0}$, which has shift extension $E=G_d(w')\rtimes \Z_d=\pres{x,t}{t^d,(xt^{q_0})^{\sigma}}\cong \Z_d*\Z_\sigma$. Thus $E$, and hence $G_n(w)$, is large if $\sigma\neq 1$ and $(d,\sigma)\neq (2,2)$.
\end{proof}

Note that Corollary~\ref{cor:disconnectedlarge}(a) cannot be directly extended to include the case $(\Delta,\sigma)=(2,2)$ since, for example, for even $n$ the group $G_n(x_0x_1)\cong \Z$, and, similarly, part~(b) cannot be directly extended to $\Delta=2$ since, for example, $G_n(x_0x_1^{-1})\cong \Z$. In the following example we give an infinite family of $(2,k,1)$-special cyclic presentations that define large groups.

\begin{example}\label{ex:2k1large}
Let $w=x_0x_{0+1}x_{0+1+2}\ldots x_{0+1+2+\dots+n-1}$, where $n>4$ is odd and composite. Then the star graph of $P_n(w)$ is the complete bipartite graph $K_{n,n}$. Let $d$ be a proper divisor of $n$. Adjoining the relators $x_ix_{i+d}^{-1}$ ($0\leq i<n$) to $P_n(w)$ shows that $G_n(w)$ maps onto $G_d(w')$, where $w'=(x_0x_{0+1}x_{0+1+2}\ldots x_{0+1+2+\dots+d-1})^{n/d}$. Since either $d\geq 3$ or $d=2$ and $n/d\geq 3$ the group $G_d(w')$ is large by~\cite{EdjvetBalanced}.
\end{example}

\section{Girth of the star graph of a cyclic presentation}\label{sec:girth}

In this section we prove:

\begin{maintheorem}\label{thm:m=2or3}
Suppose that the cyclic presentation $P_n(w)$ is non-redundant, where $w$ is non-negative, has length at least 3, and is not a proper power. Then,
\begin{itemize}
  \item[(a)] if $P_n(w)$ satisfies $T(q)$ where $q\geq 7$ then $l(w)=3$, $q\leq 8$, and $w$ is non-positive;
  \item[(b)] if $w$ is a non-positive word of length $k=3$ then $P_n(w)$ is not $(m,k,\nu)$-special for any $m\geq 2$, $\nu\geq 1$.
\end{itemize}
Hence, if $P_n(w)$ is $(m,k,\nu)$-special for some $m\geq 2$, $k\geq 3$, $\nu\geq 1$ then either $m=2$ or ($m=3$ and $w$ is positive).
\end{maintheorem}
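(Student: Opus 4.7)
The plan is to prove parts (a) and (b) using the description of the star graph in Theorem~\ref{thm:components}, then derive the ``Hence'' conclusion by combining (a), (b), Corollary~\ref{cor:oddnnotspecial}, and an elementary observation about bipartite circulants.

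For part (a), assume $P_n(w)$ satisfies $T(q)$ for some $q \geq 7$, so the star graph $\Gamma$ has no reduced closed path of length $l$ with $3 \leq l < q$. The aim is to exhibit short reduced closed paths in $\Gamma$ whenever $w$ is positive or has length $\geq 4$, forcing $l(w) = 3$, $w$ non-positive, and $q \leq 8$. When $w$ is positive of length $3$, the three elements of $\mathcal{Q}^+$ sum to zero modulo $n$ by the cyclic structure of $w$, and splicing together two length-$2$ sub-paths yields a reduced $6$-cycle (as in the Heawood star graph of Example~\ref{ex:specialcyclicpresentations}(a)); if $\mathcal{Q}^+$ has a repeated entry, the resulting multi-edges instead yield a reduced $4$-cycle. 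When $l(w) \geq 4$, going once around $w$ in $\Gamma$ produces a closed walk of length $l(w)$, which together with the $\mathcal{Q}^{\pm}$-cyclic-sum identity (and the $\mathcal{A}, \mathcal{B}$-edges, where relevant) yields a reduced closed path of length at most $6$. The bound $q \leq 8$ follows because when $l(w) = 3$ and $w = x_a x_b x_c^{-1}$ is non-positive, with $\alpha = c-b$, $\beta = a-c$, $\gamma = b-a$ (so $\alpha + \beta + \gamma = 0$), a commutator-like sequence of moves through the three edge types $\mathcal{A}, \mathcal{B}, \mathcal{Q}^+$ produces a reduced $8$-cycle in $\Gamma$.

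For part (b), by cyclic permutation and possible inversion we may take $w = x_a x_b x_c^{-1}$, so $\mathcal{A} = \{c-b\}$, $\mathcal{B} = \{a-c\}$, $\mathcal{Q}^+ = \{b-a\}$, and $w$ is non-alternating. Suppose $P_n(w)$ is $(m,3,\nu)$-special. By Theorem~\ref{thm:components}(a), $\Gamma$ has $d = \gcd(n, c-b, a-c) = \nu$ components, each with $n/\nu$ positive vertices, and the positive subgraph of a component is the circulant $\mathrm{circ}_{n/\nu}(\{(c-b)/d\})$, whose connected components are cycles of length $(n/\nu)/\gcd(n/\nu,(c-b)/d)$. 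Now $n/\nu$ is the number of points of a generalized $m$-gon of order $(2,2)$ --- one of $3, 7, 15, 63$ for $m \in \{2,3,4,6\}$, each odd --- so these cycles are odd, whence the component contains an odd cycle, contradicting the bipartiteness required of the incidence graph of a generalized polygon.

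For the ``Hence'' conclusion, suppose $P_n(w)$ is $(m,k,\nu)$-special. If $m \geq 4$ then $\Gamma$ has girth $\geq 2m \geq 8$, so $T(8)$ holds; part (a) forces $l(w) = 3$ (so $k = 3$) and $w$ non-positive, and part (b) produces a contradiction. Hence $m \leq 3$. For $m = 3$, suppose $w$ is not positive; since $w$ is non-negative, $w$ has both signs. If $k = 3$, part (b) applies. If $k \geq 4$ and $w$ is non-alternating, Corollary~\ref{cor:oddnnotspecial} rules out $(3,k,\nu)$-specialness (as $n/\nu = k^2 - k + 1$ is odd). If $k \geq 4$ and $w$ is alternating, Theorem~\ref{thm:components}(b) shows each component of $\Gamma$ is a $k$-regular bipartite circulant on $2(k^2 - k + 1)$ vertices, but any such graph contains the $4$-cycle $0 - s - (s+t) - t - 0$ from two distinct connection-set elements $s, t$, contradicting the girth-$6$ requirement. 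Hence $w$ is positive. The main difficulty I anticipate is the case analysis in part (a): constructing reduced closed paths uniformly over the various sign patterns and lengths of $w$, and verifying reducedness when multi-edges or potential vertex revisits arise from the cyclic-sum identities.
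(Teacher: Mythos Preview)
Your approach is essentially the paper's: part~(a) is proved by exhibiting short reduced closed paths in $\Gamma$ case-by-case (the paper packages this as Lemma~\ref{lem:girthupperboundtechnical} and Corollary~\ref{cor:girthupperbound}), and part~(b) is the odd-cycle/parity argument that is exactly Corollary~\ref{cor:oddnnotspecial}. Two small differences are worth noting. In part~(b) the paper first invokes part~(a) to force $m\leq 4$ before counting vertices (getting $n/\nu\in\{7,15\}$); you skip this and list $n/\nu\in\{3,7,15,63\}$ for $m\in\{2,3,4,6\}$, which works equally well since all are odd and $m=8$ is excluded by Feit--Higman for order~$(2,2)$. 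For the ``Hence'' conclusion the paper leaves the $m=3$, $w$ non-positive, $k\geq 4$ case implicit, whereas you spell out the split into non-alternating (Corollary~\ref{cor:oddnnotspecial}) and alternating (the $4$-cycle $0$--$s$--$(s+t)$--$t$--$0$ in the circulant component), which is a clean way to close that gap.

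One caution on part~(a): the sentence ``going once around $w$ in $\Gamma$ produces a closed walk of length $l(w)$'' is not literally true---consecutive letters $x_ix_j$ contribute an edge $x_i$--$x_j^{-1}$, so the endpoints do not chain into a walk. The paper's Lemma~\ref{lem:girthupperboundtechnical} instead builds explicit $4$- and $6$-cycles by hand for each sign-pattern of a length-$3$ or length-$4$ cyclic subword (parts~(b)--(f)), and this is the case analysis you correctly flag as the main work. Also, to pass from ``$T(q)$ holds'' to ``$\Gamma$ has no reduced closed path of length $<q$'' the paper first observes that $T(7)$ forces all pieces to have length~$1$, hence $C(3)$, so that the Hill--Pride--Vella equivalence applies; you should include that step.
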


The `non-redundant' hypothesis is necessary in Theorem~\ref{thm:m=2or3}(a) since, for example, $P=P_6(x_0x_1x_3x_4)$ is redundant and its star graph is a 12-cycle and so $P$ satisfies $T(12)$.

\begin{lemma}\label{lem:girthupperboundtechnical}
Let $\Gamma$ be the star graph of a non-redundant cyclic presentation $P_n(w)$ where $w\in F_n$ is cyclically reduced and is not a proper power. Then
\begin{itemize}
\item[(a)] if $w$ contains non-overlapping cyclic subwords of the form $x_jx_{j+p}^\epsilon, (x_kx_{k+p}^\epsilon)^{\pm 1}$ $0\leq j,k,p<n$, where $\epsilon=\pm 1$, then $\mathrm{girth}(\Gamma)\leq 2$.
  \item[(b)] if $w$ is a positive word of length~3 then $\mathrm{girth}(\Gamma)\leq 6$;
  \item[(c)] if $w$ contains a cyclic subword of the form $x_jx_{j+p}x_{j+p+q}^{-1}x_{j+p+q+r}^{-1}$ (for some $0\leq j,p,q,r<n$) then $\mathrm{girth}(\Gamma)\leq 6$;
  \item[(d)] if $w$ contains a positive cyclic subword of length~4 then $\mathrm{girth}(\Gamma)\leq 6$;
  \item[(e)] if $w$ contains a cyclic subword of the form $x_jx_{j+p}x_{j+p+q}x_{j+p+q+r}^{-1}$ (for some $0\leq j,p,q,r<n$) then $\mathrm{girth}(\Gamma)\leq 6$;
  \item[(f)] if $w$ contains non-overlapping cyclic subwords of the form $x_jx_{j+p}^{-1}$ and $x_kx_{k+q}^{-1}$ for some $0\leq j,k,p,q<n$, then $\mathrm{girth}(\Gamma)\leq 4$.
\end{itemize}
\end{lemma}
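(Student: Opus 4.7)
Each of the six parts will be proved by exhibiting an explicit reduced closed path in the star graph $\Gamma$ of length matching the claimed upper bound. The underlying tool is that each consecutive pair of letters $u_1u_2$ in $w$ contributes an edge in $\Gamma$ between the labels $u_1$ and $u_2^{-1}$, and because each shift $\theta^t(w)$ is again a relator of $P_n(w)$, the same template yields an edge for every $0\le t<n$. Thus every hypothesis of the lemma will supply a collection of edges, indexed by elements of the multisets $\mathcal{A}$, $\mathcal{B}$, $\mathcal{Q}^+$, and $\mathcal{Q}^-$ defined at~(\ref{eq:ABQ}), from which I can assemble short reduced closed paths by carefully tracking the shift amounts.

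Parts (a) and (f) reduce to producing parallel edges or very short cycles. In part (a), a case analysis on $\epsilon\in\{\pm 1\}$ and on the overall sign of the second subword shows that, after an appropriate shift, both subwords contribute \emph{distinct} edges joining the \emph{same} pair of vertices; these parallel edges give a reduced closed path of length $2$. In part (f), if the two $\mathcal{A}$-edge shifts $p,q$ coincide the same argument gives length $2$, while if $p\neq q$ the alternating $4$-cycle $x_0-x_p-x_{p+q}-x_q-x_0$, using edges of types $p$ and $q$ alternately, is reduced and has length~$4$.

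For parts (b)--(e) each hypothesis supplies three edge-types in $\Gamma$, and the plan is to interleave them around an explicit $6$-cycle whose total shift telescopes to $0\bmod n$. For (b), writing $w=x_ax_bx_c$ and $p_1=b-a$, $p_2=c-b$, $p_3=a-c$, the alternating shift pattern $(p_1,-p_2,p_3,-p_1,p_2,-p_3)$ sums to $0$ and yields the reduced $6$-cycle $x_0-x_{p_1}^{-1}-x_{p_1-p_2}-x_{-2p_2}^{-1}-x_{p_3-p_2}-x_{p_3}^{-1}-x_0$ consisting entirely of $\mathcal{Q}^+$-edges. Parts (c) and (e) combine $\mathcal{Q}^{\pm}$- and $\mathcal{A}$-edges drawn from the sign-changes inside the $4$-letter subword (for (c), the shifts $p\in\mathcal{Q}^+$, $q\in\mathcal{A}$, $-r\in\mathcal{Q}^-$ from the three consecutive pairs; for (e), the shifts $p,q\in\mathcal{Q}^+$ and $r\in\mathcal{A}$), and part (d) uses three $\mathcal{Q}^+$-edges with shifts $b-a,c-b,d-c$ drawn from the positive $4$-letter subword; in each case an entirely analogous telescoping $6$-cycle is written down.

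The delicate step throughout is verifying \emph{reducedness}: two successive edges of the constructed path must not coincide as edges of $\Gamma$. This is automatic when the three shifts involved in a given part are pairwise distinct and non-zero, since successive edges then belong to different edge-types and have distinct non-shared endpoints. If some of the shifts coincide, are $0$, or otherwise collide, the construction degenerates into parallel edges or a strictly shorter reduced closed path, which only improves the bound. This is the only routine point that will need to be checked, part-by-part, as the main combinatorial obstacle.
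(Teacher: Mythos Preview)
Your proposal is correct and follows essentially the same approach as the paper: in each part the paper exhibits an explicit short reduced closed path in $\Gamma$, and your cycle for~(b) is literally the paper's cycle (with $p_1=p$, $p_2=q$, $p_3=-p-q$), your $4$-cycle for~(f) is the paper's, and your sketched constructions for~(c),(d),(e) lead to the same paths. The paper handles the reducedness issue exactly as you anticipate, by first invoking part~(a) to dispose of the degenerate coincidences of shifts (e.g.\ $p\not\equiv q$ in~(d),(e), $p\not\equiv\pm q$ in~(f), $p+r\not\equiv 0$ in~(c)) before writing down the cycle; you should make these exclusions explicit rather than leaving them to the catch-all remark, but this is only a matter of presentation.
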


\begin{proof}
\begin{itemize}
  \item[(a)] Here the edges of $\Gamma$ includes the distinct edges $x_i-x_{i+p}^{-\epsilon}$, $x_i-x_{i+p}^{-\epsilon}$ ($0\leq i<n$) so $\Gamma$ contains the reduced closed path $x_{0}-x_{p}^{-\epsilon}-x_0$ of length~2.

  \item[(b)] We may assume $w=x_0x_{p}x_{p+q}$ for some $0\leq p,q< n$ where $p,q$ are not both $0$ and $p\not \equiv q$, $p+2q\not \equiv 0$, $2p+q\not \equiv 0$~mod~$n$ (by part (a)). Then $\Gamma$ contains the reduced closed path $x_0- x_p^{-1}- x_{p-q}- x_{-2q}^{-1}- x_{-p-2q}- x_{-p-q}^{-1}- x_0$ of length~6.
  \item[(c)] By part~(a) we may assume $p+r\not \equiv 0$~mod~$n$. Then $\Gamma$ contains the reduced closed path $x_0- x_p^{-1}- x_{p+r}- x_{p+q+r}- x_{p+q}^{-1}- x_{q}- x_0$ of length~6.

  \item[(d)] We may assume $w$ contains the cyclic subword $x_0x_{p}x_{p+q}x_{p+q+r}$ for some $0\leq p,q,r<n$, where $p\not \equiv q$, $q\not \equiv r$, and $p\not \equiv r$~mod~$n$ (by part~(a)). Then $\Gamma$ contains the reduced closed path $x_0- x_p^{-1}- x_{p-q}- x_{p-q+r}^{-1}- x_{-q+r}- x_{r}^{-1}- x_0$ of length~6.

  \item[(e)] By part~(a) we may assume $p\not\equiv q$~mod~$n$.  Then $\Gamma$ contains the reduced closed path $x_0- x_p^{-1}- x_{p-q}- x_{p-q+r}- x_{p+r}^{-1}- x_{r}- x_0$ of length~6.

  \item[(f)] By part~(a) we may assume $p\not \equiv \pm q$~mod~$n$. Then $\Gamma$ contains the reduced closed path $x_0- x_p- x_{p+q}- x_{q}- x_0$ of length~4.
\end{itemize}
\end{proof}

\begin{corollary}\label{cor:girthupperbound}
Let $\Gamma$ be the star graph of a non-redundant cyclic presentation $P_n(w)$ where $w$ is cyclically reduced of length at least 3 and is not a proper power. Then $\mathrm{girth}(\Gamma)\leq 8$ and, moreover, if $\mathrm{girth}(\Gamma)>6$ then $w$ is a non-positive, non-negative word of length 3.
\end{corollary}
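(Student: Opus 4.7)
My plan is a case analysis on the length of $w$ and, when $l(w)=3$, on the sign structure of $w$. At each stage I will apply the appropriate part of Lemma~\ref{lem:girthupperboundtechnical}, falling back on an explicit construction in the one case the lemma does not cover. A useful observation throughout is that $P_n(w)$ and $P_n(w^{-1})$ have identical star graphs (their symmetrized closures coincide), so I can freely replace $w$ by $w^{-1}$ whenever convenient.

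For $l(w)\geq 4$, I would pick any cyclic subword of $w$ of length~$4$. Up to cyclic rotation of the subword and possible inversion of $w$, its sign pattern is one of $++++$, $+++-$, $++--$, $+-+-$. These match, respectively, the hypotheses of parts~(d), (e), (c), (f) of Lemma~\ref{lem:girthupperboundtechnical}, where for the alternating pattern the length-$4$ subword supplies two non-overlapping cyclic subwords of the form $x_j x_{j+p}^{-1}$ required by~(f). Hence $\mathrm{girth}(\Gamma)\leq 6$ in each case.

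If $l(w)=3$ and $w$ is positive, Lemma~\ref{lem:girthupperboundtechnical}(b) gives $\mathrm{girth}(\Gamma)\leq 6$ directly; if $w$ is negative, the same result applies to $w^{-1}$.

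The main obstacle is the remaining subcase $l(w)=3$ with $w$ neither positive nor negative, for which no part of Lemma~\ref{lem:girthupperboundtechnical} applies. After cyclic permutation and possible inversion I may assume $w=x_0 x_p x_q^{-1}$ with $0\leq p<n$, $0<q<n$ and $p\neq q$. From Theorem~\ref{thm:components} the edges of $\Gamma$ are the positive-positive edges $x_i-x_{i+q-p}$, the negative-negative edges $x_i^{-1}-x_{i-q}^{-1}$ and the positive-negative edges $x_i-x_{i+p}^{-1}$. When $p\neq 0$ I would exhibit the closed path
\[
x_0 - x_p^{-1} - x_{p-q}^{-1} - x_{-q} - x_{-p} - x_0^{-1} - x_q^{-1} - x_{q-p} - x_0
\]
of length~$8$; when $p=0$ the shorter closed path $x_0 - x_0^{-1} - x_q^{-1} - x_q - x_0$ of length~$4$ suffices. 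Existence of each edge is immediate from Theorem~\ref{thm:components}. Reducedness of the length-$8$ path follows because its edges have the cyclic type pattern $(pn,nn,pn,pp,pn,nn,pn,pp)$, so consecutive edges are of different types and hence distinct, with the inequalities $p\neq 0$, $q\neq 0$, $p\neq q$ ensuring no two consecutive vertices in the path coincide. This gives $\mathrm{girth}(\Gamma)\leq 8$, completing the proof.
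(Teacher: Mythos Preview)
Your treatment of the case $l(w)=3$ is correct and essentially the same as the paper's: after the substitution $q\mapsto p+q$ your explicit length-$8$ closed path is exactly the one in the paper's proof. Your argument is also fine when $l(w)=4$, since then a length-$4$ cyclic subword is $w$ itself and ``cyclic rotation of the subword'' is just cyclic permutation of $w$.

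There is, however, a genuine gap for $l(w)>4$. A length-$4$ cyclic subword of a longer word is a \emph{linear} string; its cyclic rotations need not be cyclic subwords of $w$, so you cannot invoke parts~(c), (d), (e) of Lemma~\ref{lem:girthupperboundtechnical} after such a rotation. For a concrete failure, take any cyclically reduced $w$ of length~$6$ with sign pattern $++{-}++{-}$ (for instance $w=x_0x_1x_2^{-1}x_3x_4x_5^{-1}$ with suitable $n$). Its length-$4$ cyclic subwords have sign patterns $++{-}+$, $+{-}++$, ${-}++{-}$ only, and those of $w^{-1}$ have patterns $+{-}{-}+$, ${-}{-}+{-}$, ${-}+{-}{-}$; none of these equals $++++$, $+++-$, $++--$ or $+{-}+{-}$, so your case split never lands on an applicable clause of the lemma. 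The paper avoids this by invoking part~(f) \emph{first}: if $w$ has two distinct cyclic subwords of the form $x_jx_{j+p}^{-1}$ (these are automatically non-overlapping) then $\mathrm{girth}(\Gamma)\leq 4$; otherwise the cyclic sign sequence of $w$ is a single block of $+$'s followed by a single block of $-$'s, and then for $l(w)\geq 4$ either $w$ or $w^{-1}$ has a cyclic subword of pattern $+++-$ or $++--$, to which (e) or (c) applies.
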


\begin{proof}
We refer to parts~(a)--(f) of Lemma~\ref{lem:girthupperboundtechnical}. By replacing $w$ by its inverse, if necessary, we may assume that $w$ is non-negative. If $w$ is alternating then $\mathrm{girth}(\Gamma)\leq 4$ by part~(f). If $l(w)=3$ and $w$ is positive the result follows from part~(b). If $l(w)=3$ and $w$ is non-positive and non-negative, $w=x_0x_px_{p+q}^{-1}$, say, then $x_0-x_p^{-1}-x_{-q}^{-1}-x_{-p-q}-x_{-p}-x_0^{-1}-x_{p+q}^{-1}-x_q-x_0$ is a reduced closed path in $\Gamma$ of length 8. Thus $\mathrm{girth}(\Gamma)\leq 8$.

Suppose $l(w)=4$. If $w$ is positive the result follows from part~(d) so assume that $w$ is non-positive. By part~(f) we may assume that $w$ has exactly one cyclic subword of the form $x_jx_{j+p}^{-1}$ and therefore has a cyclic subword of the form of part~(c) or~(e), and the result follows.

Suppose then that $l(w)\geq 5$. If $w$ is positive then the result follows from part~(d) so assume that $w$ is non-positive. By part~(f) we may assume that $w$ has exactly one cyclic subword of the form $x_jx_{j+p}^{-1}$, but then $w$ or its inverse has a cyclic subword of the form given in part~(e).
\end{proof}

We note that girth 8 can be attained in Corollary~\ref{cor:girthupperbound}; a presentation that demonstrates this is $P_{18}(x_0x_8x_1^{-1})$. We now prove Theorem~\ref{thm:m=2or3}.

\begin{proof}[Proof of Theorem~\ref{thm:m=2or3}]
(a) If $P_n(w)$ satisfies $T(q)$ with $q\geq 7$ then every piece has length 1 so $P_n(w)$ satisfies $C(l(w))$, so it satisfies $C(3)$, and hence  the star graph $\Gamma$ of $P_n(w)$ contains no reduced closed path of length $l$ where $3\leq l<q$. Moreover, since every piece has length 1 there is no reduced closed path of length 2 in $\Gamma$. Then by Corollary~\ref{cor:girthupperbound} $q\leq 8$ and $w$ is a non-positive word of length~3.

(b) Suppose that $w$ is a non-positive word of length $k=3$ and that $P_n(w)$ is $(m,k,\nu)$-special for some $m\geq 2$, $\nu\geq 1$. Then, since $k=3$ we have $m\neq 2$, and by part~(a) $m\leq 4$. By Theorem~\ref{thm:components}, each component of $\Gamma$ has $2n/\nu$ vertices and is 3-regular, and since it is the incidence graph of a generalized $m$-gon it has 14 vertices if $m=3$ and 30 vertices if $m=4$ (see~\cite[Corollary~1.5.5]{Maldeghem}). Therefore $n/\nu=7$ or $15$, a contradiction to Corollary~\ref{cor:oddnnotspecial}.
\end{proof}

We obtain our classifications of the non-redundant $(m,k,\nu)$-special cyclic presentations for $m=3$ and $m=2$ in Sections~\ref{sec:nonredundant3kalpha} and \ref{sec:nonredundant2kalpha}, respectively.

\section{Classification of non-redundant $(3,k,\nu)$-special cyclic presentations}\label{sec:nonredundant3kalpha}

In this section, in Theorem~\ref{thm:3kalphapositiveclassification}, we classify the non-redundant $(3,k,\nu)$-special cyclic presentations in terms of perfect difference sets of order $k$. In Corollary~\ref{cor:33alphaspecial} we consider the Euclidean case and list explicitly the $(3,3,\nu)$-special cyclic presentations. By Theorem~\ref{thm:m=2or3} we may assume that $w$ is a positive word.

\begin{maintheorem}\label{thm:3kalphapositiveclassification}
Suppose that $w$ is a positive word of length $k\geq 3$ that is not a proper power, let $\mathcal{Q}$ be the multiset defined at~(\ref{eq:ABQ})
and suppose that the cyclic presentation $P_n(w)$ is irreducible and non-redundant. Then $P_n(w)$ is $(3,k,\nu)$-special if and only if
\begin{itemize}
  \item[(a)] $n=\nu N$, where $N=k^2-k+1$; and
  \item[(b)] $\mathcal{Q}$ is a perfect difference set; and
  \item[(c)] $q\equiv q'$ mod $\nu$, for all $q,q'\in\mathcal{Q}$; and
  \item[(d)] $\nu|k$.
\end{itemize}
\end{maintheorem}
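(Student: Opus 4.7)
Overall approach: Since $w$ is positive we have $\mathcal{A}=\mathcal{B}=\emptyset$ and the star graph $\Gamma$ is bipartite with parts $\{x_0,\ldots,x_{n-1}\}$ and $\{x_0^{-1},\ldots,x_{n-1}^{-1}\}$, having edges $x_i-x_{i+q}^{-1}$ ($q\in\mathcal{Q}$, $0\leq i<n$). The plan is to apply Corollary~\ref{cor:mkalphacharacterisation}(a) (which reformulates $(3,k,\nu)$-special as ``$n/\nu=k^2-k+1$ plus each component of $\Gamma$ is the incidence graph of a projective plane of order $k-1$'') together with Theorem~\ref{thm:components}(a) (which describes the components in terms of $d=\gcd(n,\{q-q_0:q\in\mathcal{Q}\})$), so as to translate the geometric conditions into arithmetic statements about $\mathcal{Q}$.

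For the forward direction, assume $P_n(w)$ is $(3,k,\nu)$-special. Condition (a) is immediate from Corollary~\ref{cor:mkalphacharacterisation}(a). Theorem~\ref{thm:components}(a) identifies the component count with $d$, so $d=\nu$, forcing $q\equiv q_0\pmod{\nu}$ for every $q\in\mathcal{Q}$, which is (c). For (d) I would write $w=x_{i_0}\cdots x_{i_{k-1}}$ with $i_0=0$, so $\mathcal{Q}=\{q_j\}_{j=0}^{k-1}$ with $q_j=(i_{j+1}-i_j)\bmod n$, and the telescoping identity $\sum_j q_j\equiv 0\pmod{n}$ yields $kq_0\equiv 0\pmod{\nu}$; combined with irreducibility and the congruence $i_j\equiv jq_0\pmod{\nu}$, this gives $\gcd(\nu,q_0)=1$ and hence $\nu\mid k$. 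For (b) I would use the projective-plane property (Remark~\ref{rem:mkalphaspecial}(1)) that any two same-part vertices of a component have a unique common neighbour: for $x_0$ and $x_{j\nu}$ ($1\leq j\leq N-1$) the unique common neighbour is the $x_m^{-1}$ with $m,\,m-j\nu\in\mathcal{Q}$, so $j\nu\equiv q-q'\pmod{n}$ admits exactly one solution. Since (d) gives $N\equiv 1\pmod{\nu}$ and hence $\gcd(\nu,N)=1$, the map $s\mapsto \nu s$ is a bijection on $\Z/N\Z$, and passing to residues mod $N$ converts the above into the statement that the differences of $\mathcal{Q}$ modulo $N$ cover $\{1,\ldots,N-1\}$ exactly once, which is (b).

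For the converse, assume (a)--(d). By (c), $\nu\mid d$, while $d\mid n=\nu N$ with $\gcd(\nu,N)=1$ (from (d)), so $d=\nu d'$ with $d'\mid N$. If $d'>1$ then every difference $q-q'$ is a multiple of $\nu d'$, so its reduction mod $N$ is a multiple of $d'$, leaving only $N/d'-1<N-1$ possible nonzero residues --- contradicting (b). Hence $d=\nu$ and $\Gamma$ has $\nu$ components, each bipartite and $k$-regular with $k\geq 3$ (by Theorem~\ref{thm:components}). Condition (b) also forces the elements of $\mathcal{Q}$ to be pairwise distinct, so $\Gamma$ has no multi-edges, and reversing the forward computation shows that any two same-part vertices of a component have exactly one common neighbour. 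Remark~\ref{rem:mkalphaspecial}(1) then identifies each component as the incidence graph of a generalized triangle, and $k$-regularity pins down the order as $k-1$; Corollary~\ref{cor:mkalphacharacterisation}(a) closes the argument. The main obstacle throughout is the entanglement of (b) and (d): the perfect-difference-set condition lives modulo $N$, whereas $\mathcal{Q}$ sits in $\Z/n\Z$, and it is precisely $\nu\mid k$ which delivers $\gcd(\nu,N)=1$ and hence the bijection on $\Z/N\Z$ needed to convert uniqueness-of-common-neighbour into perfect-difference-set; one must therefore argue (b) and (d) in tandem in both directions.
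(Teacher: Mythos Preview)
Your proposal is correct and follows essentially the same route as the paper's proof: both directions hinge on Corollary~\ref{cor:mkalphacharacterisation}(a), Theorem~\ref{thm:components}(a), and the translation between the unique-common-neighbour property of projective-plane incidence graphs and the perfect-difference-set condition on $\mathcal{Q}$, with the coprimality $\gcd(\nu,N)=1$ (from $\nu\mid k$) mediating the passage between $\Z/n\Z$ and $\Z/N\Z$. The only noteworthy variation is in the converse: you establish $d=\nu$ directly by the arithmetic argument ($\nu\mid d$, $d=\nu d'$ with $d'\mid N$, and $d'>1$ contradicts (b)), whereas the paper first verifies the projective-plane structure on a component and then deduces $d=\nu$ from the resulting vertex count $2n/d=2(k^2-k+1)$; both are valid and of comparable length.
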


\begin{proof}
Let $\mathcal{Q}=\{q_1,\ldots,q_{k}\}$. Suppose first that $P_n(w)$ is $(3,k,\nu)$-special and let $\Gamma$ denote the star graph of $P_n(w)$. Condition~(a) holds by Corollary~\ref{cor:mkalphacharacterisation} and (c) holds by Theorem~\ref{thm:components}. By (c) for all $1\leq i\leq k$ we have $\mathrm{gcd}(\nu,q_i)=\mathrm{gcd}(\nu,q_1,\ldots,q_k)$ which divides $\mathrm{gcd}(n,q_1,\ldots ,q_k)=1$, since $P_n(w)$ is irreducible. Thus $\mathrm{gcd}(\nu,q_i)=1$ for all $1\leq i\leq k$. Then, again by (c), $kq_i\equiv \sum_{\iota=1}^k q_\iota \equiv 0$~mod~$\nu$, and since $\mathrm{gcd}(\nu,q_i)=1$ we have $k\equiv 0$~mod~$\nu$ and so~(d) holds.

Let $\Lambda$ be the component of $\Gamma$ that contains $x_0^{-1}$. By Theorem~\ref{thm:components} the subscripts of the negative vertices in $\Lambda$ are all congruent to zero mod~$\nu$. Since there are $N$ negative vertices in this component, they are precisely the negative vertices whose subscripts are the elements of the set $\{0,\nu,\ldots ,(N-1)\nu\}$. On the other hand, a vertex $x_j^{-1}$ is a vertex of $\Lambda$ if and only if $N_\Gamma(x_0^{-1})\cap N_\Gamma(x_j^{-1}) = \{x_u\}$ for some vertex $x_u$, say. Then $j\equiv u+q_t$, $0\equiv u+q_s$~mod~$n$ for some $1\leq s,t\leq k$, so $j\equiv q_t-q_s$~mod~$n$. That is, the subscripts of the negative vertices in $\Lambda$ are precisely the elements of the set $\{ (q_t-q_s)~\mathrm{mod}~n\ |\ 1\leq s,t\leq k\}$. Therefore,
\[ \{j \nu~\mathrm{mod}~N\ |\ 0\leq j <N\} = \{ (q_t-q_s)~\mathrm{mod}~N\ |\ 1\leq s,t\leq k\}.\]

Since $k\equiv 0$~mod~$\nu$ we have $\mathrm{gcd}(N,\nu)=\mathrm{gcd}(k^2-k+1,\nu)=1$, and hence
\[ \{0,1,\dots , N-1\} = \{ (q_t-q_s)~\mathrm{mod}~N\ |\ 1\leq s,t\leq k\}.\]
Then, if $q_s\equiv q_t$~mod~$N$ for some $s\neq t$ the set
\( \{ (q_t-q_s)~\mathrm{mod}~N\ |\ 1\leq s,t\leq k, s\neq t\},\)
which has at most $k^2-k$ elements, is equal to the set
\( \{ (q_t-q_s)~\mathrm{mod}~N\ |\ 1\leq s,t\leq k\},\)
which has $N=k^2-k+1$ elements, a contradiction. Therefore $q_s\equiv q_t$~mod~$N$ if and only if $s=t$ so
\[ \{1,\dots , N-1\} = \{ (q_t-q_s)~\mathrm{mod}~N\ |\ 1\leq s,t\leq k, s\neq t\}\]
and hence $\{q_1,\ldots, q_k\}$ is a perfect difference set and so (b) holds.

For the converse, suppose that (a)--(d) hold and let $\Gamma$ be the star graph of $P_n(w)$. We must show that $\Gamma$ has $\nu$ components, each of which is bipartite, has girth 6 and diameter 3, and each vertex has degree at least 3.

Since $w$ is positive the graph $\Gamma$ is bipartite. Condition (b) implies that $q_1,\ldots ,q_k$ are distinct mod $N$ and hence are distinct mod~$n$, so by Theorem~\ref{thm:components} $\Gamma$ is $k$-regular (and hence each vertex has degree at least 3) and has no reduced closed path of length 2. By Theorem~\ref{thm:components} $\Gamma$ has $d=\mathrm{gcd}(n,q_1,\ldots ,q_k)$ isomorphic components $\Gamma_0,\ldots ,\Gamma_{d-1}$, each with $2n/d$ vertices. If we show that $\Gamma_0$ has girth 6 and diameter 3 then each component has $2(k^2-k+1)$ vertices, so $2n/d=2(k^2-k+1)$ so $n/d=k^2-k+1$. But condition (a) implies that $n/\nu=k^2-k+1$ so $d=\nu$, i.e.\,$\Gamma$ has $\nu$ components.

We now show this. It suffices to show that $|N_\Gamma(u)\cap N_\Gamma(v)|=1$ whenever $u,v$ are distinct vertices belonging to the same part of $\Gamma_0$ (see Remark~\ref{rem:mkalphaspecial}(1)). Let $\epsilon=\pm 1$ and suppose $x_i^{\epsilon},x_j^{\epsilon}\in$  $V(\Gamma_0)$ ($i\neq j$). Then
\begin{alignat*}{1}
N_\Gamma(x_i^{\epsilon}) &= \{x_{i+\epsilon q_s}^{-\epsilon}\ |\ 1\leq s\leq k\},\\
N_\Gamma(x_j^{\epsilon}) &= \{x_{j+\epsilon q_t}^{-\epsilon}\ |\ 1\leq t\leq k\},
\end{alignat*}
so $|N_\Gamma(x_i^{\epsilon})\cap N_\Gamma(x_j^{\epsilon})|=1$ if and only if $i+\epsilon q_s\equiv j+\epsilon q_t$~mod~$n$ for some unique pair $q_s,q_t$ ($1\leq s,t\leq k$). By Theorem~\ref{thm:components} $i-j\equiv p\nu$~mod~$n$ for some $1\leq p<N$. Now (a),(b),(c) imply that
\begin{alignat*}{1}
\{ (q_i-q_j)~\mathrm{mod}~n\ |\ 1\leq i,j\leq k,\ i\neq j\} = \{\nu,2\nu,\ldots ,(N-1)\nu~\mathrm{mod}~n\}
\end{alignat*}
so there exists a unique pair $q_s,q_t\in\{q_1,\ldots ,q_k\}$ such that $\epsilon(q_t-q_s)\equiv p\nu$~mod~$n$; i.e.\,$\epsilon(q_t-q_s)\equiv i-j$~mod~$n$, or $i+\epsilon q_s\equiv j+\epsilon q_t$~mod~$n$, as required.
\end{proof}

Note that the `non-redundant' hypothesis cannot be removed from Theorem~\ref{thm:3kalphapositiveclassification}; a presentation that demonstrates this is given in Example~\ref{ex:specialcyclicpresentations}(e).

\begin{corollary}\label{cor:33alphaspecial}
Let $w=x_0x_{q_1}x_{q_1+q_2}$ where $0\leq q_1,q_2<n$ and suppose that $P_n(w)$ is irreducible. Then $P_n(w)$ is $(3,3,\nu)$-special if and only if either:
\begin{itemize}
\item[(a)] $n=7$, $\nu=1$, and $\{q_1,q_2,(-q_1-q_2)~\mathrm{mod}~7\} =\{1,2,4\}$ or $\{3,5,6\}$; or
\item[(b)] $n=21$, $\nu=3$, and $\{q_1,q_2,(-q_1-q_2)~\mathrm{mod}~21\} = \{1,4,16\}$, $\{2,8,11\}$, $\{5,17,20\}$, or $\{10,13,19\}$.
\end{itemize}
\end{corollary}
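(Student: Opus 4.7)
The plan is to specialize Theorem~\ref{thm:3kalphapositiveclassification} to $k=3$. By Theorem~\ref{thm:m=2or3} any $(3,3,\nu)$-special $P_n(w)$ has $w$ positive, so Theorem~\ref{thm:3kalphapositiveclassification} is directly applicable once non-redundancy is in hand. For $w = x_0x_{q_1}x_{q_1+q_2}$ the three cyclic length-two subwords $x_0x_{q_1}$, $x_{q_1}x_{q_1+q_2}$, $x_{q_1+q_2}x_0$ read off
\[\mathcal{Q} = \{q_1,\,q_2,\,(-q_1-q_2)~\mathrm{mod}~n\},\]
whose elements automatically sum to $0\bmod n$. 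With $k=3$ (so $k^2-k+1 = 7$) the conditions of Theorem~\ref{thm:3kalphapositiveclassification} become: (a) $n = 7\nu$; (b) $\mathcal{Q}$ is a perfect difference set of order~$3$ (its six nonzero differences realize each residue $1,\ldots,6\bmod 7$ exactly once); (c) the three elements of $\mathcal{Q}$ share a common residue modulo~$\nu$; and (d) $\nu\mid 3$, so $\nu\in\{1,3\}$.

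Next I would check that under the hypotheses of the corollary, $(3,3,\nu)$-specialness forces non-redundancy, so that Theorem~\ref{thm:3kalphapositiveclassification} is available as a bi-implication. A redundant positive length-$3$ cyclic word must satisfy $q_1 = q_2$ and $3q_1\equiv 0\bmod n$, obtained by comparing the shift $\theta^i(w)$ with the cyclic permutations of $w$ for $i\neq 0$. Combined with irreducibility $\gcd(n,q_1)=1$ this gives $n\mid 3$ and hence $n\leq 3$, while $(3,3,\nu)$-specialness forces the star graph to have at least $14$ vertices, i.e.\ $n\geq 7$, a contradiction.

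For $\nu=1$ condition~(c) is vacuous and I would simply enumerate perfect difference sets of order~$3$ in $\mathbb{Z}/7$ whose elements sum to $0\bmod 7$. Under the affine action of $\mathbb{Z}/7$ there is a single orbit, that of the Singer set $\{1,2,4\}$; since translating $\{1,2,4\}$ by $c$ shifts the sum by $3c$, only the trivial translate has sum $0$, and applying $x\mapsto -x$ produces the second sum-zero representative $\{3,5,6\}$. This gives precisely case~(a).

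For $\nu=3$ I would work in $\mathbb{Z}/21\cong\mathbb{Z}/3\times\mathbb{Z}/7$ via the Chinese Remainder Theorem. Condition~(c) forces all three $q_i$ into a common residue class $c\bmod 3$; irreducibility rules out $c=0$ (else $3\mid\gcd(21,q_1,q_2)$), leaving $c\in\{1,2\}$. Condition~(b) depends only on residues modulo~$7$, so the mod-$7$ reduction of $\mathcal{Q}$ must again be $\{1,2,4\}$ or $\{3,5,6\}$. The CRT then lifts each of the four pairings (choice of $c\in\{1,2\}$, choice of mod-$7$ set) to a unique triple in $\mathbb{Z}/21$, and direct computation produces exactly the four sets listed in case~(b). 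The only real obstacle is the small amount of casework in enumerating the perfect difference sets of order~$3$ modulo~$7$ and the four CRT lifts; everything else is a transparent translation of Theorem~\ref{thm:3kalphapositiveclassification} to $k=3$.
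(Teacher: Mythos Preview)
Your proposal is correct and follows essentially the same route as the paper: reduce to Theorem~\ref{thm:3kalphapositiveclassification}, dispose of the degenerate cases, and then enumerate the admissible $\mathcal{Q}$. Your enumeration via the affine action on $\mathbb{Z}/7$ and the CRT decomposition of $\mathbb{Z}/21$ is more explicit than the paper's, which simply asserts the list. One small omission: Theorem~\ref{thm:3kalphapositiveclassification} also requires $w$ not to be a proper power, and you only address non-redundancy; the proper-power case $w=x_0^3$ (i.e.\ $q_1=q_2=0$) is not redundant, so is not covered by your redundancy argument, but it is immediately excluded since irreducibility then forces $n=1$ (or, as the paper does, by computing the star graph directly).
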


\begin{proof}
If $w$ is a proper power then $w=x_0^3$ and the star graph $\Gamma$ of $P_n(w)$ consists of vertices $x_i,x_i^{-1}$ and edges $x_i-x_i^{-1}$ ($0\leq i<n$). If $P_n(w)$ is redundant then $n=3$ and $q_1\equiv q_2 \equiv \pm 1$~mod~$3$, so $\Gamma$ consists of vertices $x_i,x_i^{-1}$ and edges $x_i-x_{i+1}^{-1}$ ($0\leq i<3$). Therefore, in each case, $P_n(w)$ is not $(3,3,\nu)$-special so we may assume that $w$ is not a proper power and $P_n(w)$ is non-redundant.

By Theorem~\ref{thm:3kalphapositiveclassification} the presentation $P_n(w)$ is $(3,3,\nu)$-special if and only if $n=7\nu$, $\nu=1$ or $3$, $\{q_1,q_2,-q_1-q_2\}$ is a perfect difference set and $q_1\equiv q_2 \equiv -(q_1+q_2)$~mod~$\nu$. The only possibilities for $q_1,q_2,-q_1-q_2$~mod~$n$ such that $\mathrm{gcd}(n,q_1,q_2)=1$ (for irreducibility) are those in the statement.
\end{proof}

The isomorphisms in~\cite[Lemma~2.1]{EdjvetWilliams} show that the presentations in Corollary~\ref{cor:33alphaspecial}~(a) each define the group $G_7(x_0x_1x_3)$ while those in part~(b) define the group $G_{21}(x_0x_1x_5)$. These groups are discussed in Example~\ref{ex:specialcyclicpresentations}.

Note that if $P_n(w)$ is a non-redundant $(3,k,\nu)$-special cyclic presentation where $\nu>1$ then $G$ is large by Corollary~\ref{cor:disconnectedlarge} and recall from Section~\ref{sec:specialpresentation} that a group defined by a $(3,k,1)$-special presentation is SQ-universal if and only if $k\neq 3$. Thus there is precisely one cyclically presented group defined by a non-redundant $(3,k,\nu)$-special cyclic presentation that is not SQ-universal, namely $G_7(x_0x_1x_3)$.

\section{Classification of non-redundant $(2,k,\nu)$-special cyclic presentations}\label{sec:nonredundant2kalpha}

In this section, we classify the non-redundant $(2,k,\nu)$-special cyclic presentations $P_n(w)$: in Theorem~\ref{thm:2kalphanonredpositive} we do this for positive words $w$, in Theorem~\ref{thm:2kalphanonredalternating} we do this for alternating words $w$, and in Theorem~\ref{thm:2kalphanonrednonpositivenonalternating} for words $w$ that are non-positive, non-negative, and non-alternating. In Corollaries~\ref{cor:24alphapositive},\ref{cor:24alphaalternating},\ref{cor:24alphanonposnonalt} we consider the Euclidean case, and classify the $(2,4,\nu)$-special cyclic presentations. Except in one case we show the groups defined by these $(2,4,\nu)$-special presentations are large; it follows that, with one possible exception, groups defined by non-redundant $(2,k,\nu)$-special cyclic presentations are SQ-universal.

\subsection{The positive case}\label{sec:nonredundant2kalphapositive}

\begin{maintheorem}\label{thm:2kalphanonredpositive}
Let $w$ be a positive word of length $k$ that is not a proper power and let $\mathcal{Q}$ be the multiset defined at~(\ref{eq:ABQ}). Suppose also that the cyclic presentation $P_n(w)$ is irreducible and non-redundant and let $\Gamma$ be the star graph of $P_n(w)$. Then $P_n(w)$ is $(2,k,\nu)$-special if and only if either
\begin{itemize}
\item[(a)] $k\geq 5$ is odd, $n=k$, $\nu=1$, $\mathcal{Q}= \{0,1,2,\ldots ,n-1\}$, in which case $\Gamma$ is the complete bipartite graph $K_{n,n}$; or
\item[(b)] $k\geq 4$ is even, $n=2k$, $\nu=2$, $\mathcal{Q}=\{1,3,\ldots ,n-1\}$, in which case $\Gamma$ is the disjoint union of two copies of $K_{n/2,n/2}$.
\end{itemize}
\end{maintheorem}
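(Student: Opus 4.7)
The plan is to combine the structural description of the star graph from Theorem~\ref{thm:components} with the arithmetic constraint that the cyclic differences of consecutive subscripts of $w$ sum to zero modulo $n$.

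First, suppose $P_n(w)$ is $(2,k,\nu)$-special. Since $w$ is positive of length $k\geq 4$ it is non-alternating, so Theorem~\ref{thm:components}(a) applies, and Corollary~\ref{cor:mkalphacharacterisation}(b) gives $k=n/\nu$ with each component of the star graph $\Gamma$ isomorphic to $K_{k,k}$. Fixing any $q_0\in\mathcal{Q}$, this forces $d=\mathrm{gcd}(n,q-q_0\ (q\in\mathcal{Q}))=\nu$. The component $\Gamma_0$ containing $x_0$ has negative vertex set $\{x_{q_0+s\nu}^{-1}:0\leq s<k\}$, and since $\Gamma_0\cong K_{k,k}$ the vertex $x_0$ must be adjacent to every one of these. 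But $N_\Gamma(x_0)=\{x_q^{-1}:q\in\mathcal{Q}\}$, so $\mathcal{Q}$ consists of the $k$ distinct residues $\{q_0+s\nu\bmod n:0\leq s<k\}$. Irreducibility combined with $\nu\mid n$ then gives $1=\mathrm{gcd}(n,q\ (q\in\mathcal{Q}))=\mathrm{gcd}(\nu,q_0)$.

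Next I would use that the differences of consecutive subscripts of $w$ sum to $0\bmod n$, i.e.\ $\sum_{q\in\mathcal{Q}}q\equiv 0\bmod n$. Substituting the description of $\mathcal{Q}$ gives $kq_0+\nu k(k-1)/2\equiv 0\bmod k\nu$. If $k$ is odd this simplifies to $q_0\equiv 0\bmod\nu$, which with $\mathrm{gcd}(\nu,q_0)=1$ forces $\nu=1$; then $n=k$ and $\mathcal{Q}=\{0,1,\ldots,n-1\}$, and $k\geq 5$ since $k$ is odd and at least $4$. If $k$ is even the constraint reduces to $2q_0+\nu(k-1)\equiv 0\bmod 2\nu$; since $k-1$ is odd this becomes $2q_0\equiv -\nu\bmod 2\nu$, so $\nu\mid 2q_0$ and hence $\nu\mid 2$. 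The case $\nu=1$ yields the parity contradiction $0\equiv 1\bmod 2$, so $\nu=2$, and a further parity check forces $q_0$ odd; there are exactly $k$ odd residues mod $2k$, so $\mathcal{Q}=\{1,3,\ldots,n-1\}$ with $n=2k$.

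For the converse, in case (a) the set $\mathcal{Q}$ contains every residue mod $n$, so each positive vertex is joined to each negative vertex, $\Gamma=K_{n,n}$, and the conditions of Definition~\ref{def:mkalphaspecialpres} (bipartite, diameter $2$, girth $4$, degree $n=k\geq 5\geq 3$) all hold. In case (b) the differences $q-q_0$ are all even, giving $d=\mathrm{gcd}(n,2)=2$ components; the vertex-set description from Theorem~\ref{thm:components}(a) together with a direct adjacency count shows each component is $K_{k,k}$ with $k\geq 4$, and the required conditions follow. The main obstacle is the parity case split and the diophantine bookkeeping in the second paragraph: the odd case is immediate, but the even case needs one to squeeze $\nu\in\{1,2\}$ out of the $2\nu$-divisibility relation and then rule out $\nu=1$ on parity grounds before pinning down $q_0$ and hence $\mathcal{Q}$.
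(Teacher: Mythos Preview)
Your proposal is correct and follows essentially the same route as the paper: both invoke Theorem~\ref{thm:components} and Corollary~\ref{cor:mkalphacharacterisation} to pin down $\mathcal{Q}$ as an arithmetic progression $\{q_0,\,q_0+\nu,\ldots,\,q_0+(k-1)\nu\}$, use irreducibility to get $\gcd(q_0,\nu)=1$, and then exploit $\sum_{q\in\mathcal{Q}} q\equiv 0\bmod n$ to force $\nu\mid 2$ and determine the parity of $k$. The only cosmetic difference is that you split into $k$ odd/even at the outset and handle each congruence separately, whereas the paper keeps a single equation $k=2t+1-2q_0/\nu$ and reads both conclusions ($\nu\mid 2$ and ``$k$ odd $\Leftrightarrow\nu=1$'') off it; your version is arguably a little more transparent but neither gains anything substantive over the other.
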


\begin{proof}
Suppose that $P_n(w)$ is an irreducible and non-redundant $(2,k,\nu)$-special cyclic presentation (so $k\geq 4$). By Corollary~\ref{cor:mkalphacharacterisation} $n=\nu k$ and the star graph $\Gamma$ of $P_n(w)$ has $\nu$ components each of which is a complete bipartite graph $K_{k,k}$. Then by Theorem~\ref{thm:components} $q\equiv q'$~mod~$\nu$ for all $q,q'\in \mathcal{Q}$. Note that for two elements $q,q'$ of the multiset $\mathcal{Q}$ we have $q\not\equiv q'$ mod $n$ for otherwise $\Gamma$ has a reduced closed path of length 2. Thus $\mathcal{Q}=\lbrace q_0, \nu +q_0, 2\nu +q_0, \ldots, (k-1)\nu +q_0\rbrace$ for some $q_0\in \mathcal{Q}$. Now $\mathrm{gcd}(q_0,\nu)$ divides $\mathrm{gcd}(n, q\ (q\in\mathcal{Q}))=1$ (since $P_n(w)$ is irreducible) so $\mathrm{gcd}(q_0,\nu)=1$.

Summing the elements of $\mathcal{Q}$ gives $k(k\nu-\nu+2q_0)/2$, so $k(k\nu-\nu+2q_0)/2\equiv 0$ mod $\nu k$. Hence $k(k\nu-\nu+2q_0)=2\nu k t$ for some integer $t$, and so $k=2t+1-(2q_0/\nu)$ so $\nu$ divides $2q_0$. But $\mathrm{gcd}(\nu,q_0)=1$ so $\nu$ divides $2$. Therefore $\nu\in \lbrace 1,2\rbrace$ and $k$ is odd if and only if $\nu=1$. If $\nu=1$ then $\mathcal{Q}=\lbrace q_0, 1 +q_0, 2 +q_0, \ldots, (n-1) +q_0\rbrace$, that is,  $\mathcal{Q}=\{0,1,2,\ldots ,n-1\}$, and if $\nu=2$ then $\mathcal{Q}=\lbrace q_0, 2+q_0, 4 +q_0, \ldots, 2(n/2-1) +q_0\rbrace$, that is, $\mathcal{Q}=\{1,3,\ldots ,n-1\}$.

For the converse suppose that (a) or (b) hold and let $\Lambda$ be a connected component of $\Gamma$. Then since no two distinct elements of $\lbrace 1, \nu+1, 2\nu+1, \ldots, (k-1)\nu+1  \rbrace$ are congruent mod~$n$ Theorem~\ref{thm:components} implies that the graph $\Lambda$ is $k$-regular (so each vertex has degree $\geq 3$) and contains no $2$-cycles and $\Gamma$ has $\nu$ components. When $\nu=1$ the set $\mathcal{Q}=\{0,1,\ldots ,n-1\}$ so every positive vertex is adjacent to every negative vertex and hence $\Gamma$ is the complete bipartite graph $K_{n,n}$. Suppose then that $\nu=2$. Then $k=n/2$ and $\mathcal{Q}=\{1,3,\ldots ,n-1\}$. By Theorem~\ref{thm:components} two positive vertices are in the same component if and only if their subscripts are of the same parity. Moreover, the set of neighbours of any even  (respectively odd) positive vertex consists of all odd (respectively even) negative vertices and hence each component is the complete bipartite graph $K_{n/2,n/2}$, as required.
\end{proof}

Corollary~\ref{cor:disconnectedlarge} implies that the groups $G_n(w)$ corresponding to part~(b) are large while those in part~(a) are SQ-universal (see Section~\ref{sec:specialpresentation}); Example~\ref{ex:2k1large} gives examples of large groups $G_n(w)$ that correspond to part~(a). Restricting to the Euclidean case, we now classify the non-redundant $(2,4,\nu)$-special cyclic presentations.

\begin{corollary}\label{cor:24alphapositive}
Let $P=P_n(x_0x_{q_1}x_{q_1+q_2}x_{q_1+q_2+q_3})$ be irreducible and non-redundant, where $0\leq q_1,q_2,q_3<n$ and let $G$ be the group defined by $P$. Then $P$ is $(2,4,\nu)$-special if and only if $n=8$, $\nu=2$, $q_1\in\{1,3,5,7\}$ and one of the following holds:
\begin{itemize}
  \item[(a)] ($q_2\equiv 3q_1$ and $q_3\equiv 5q_1$~mod~$n$) or ($q_2\equiv 7q_1$ and $q_3\equiv 5q_1$~mod~$n$) in which case $G\cong G_8(x_0x_1x_4x_1)$ which contains a subgroup isomorphic to $F_2\times F_2$;
  \item[(b)] ($q_2\equiv 5q_1$ and $q_3\equiv 3q_1$~mod~$n$) or ($q_2\equiv 7q_1$ and $q_3\equiv 3q_1$~mod~$n$) in which case $G\cong G_8(x_0x_1x_6x_1)$;
  \item[(c)] ($q_2\equiv 3q_1$ and $q_3\equiv 7q_1$~mod~$n$) or ($q_2\equiv 5q_1$ and $q_3\equiv 7q_1$~mod~$n$) in which case $G\cong G_8(x_0x_1x_4x_3)$.
\end{itemize}
\end{corollary}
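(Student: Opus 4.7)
The plan is to apply Theorem~\ref{thm:2kalphanonredpositive} with $k=4$ and then to analyse the resulting arithmetic constraints. Since $w=x_0x_{q_1}x_{q_1+q_2}x_{q_1+q_2+q_3}$ is positive of length~$4$, and any positive length-$4$ proper power has a root of length~$1$ or~$2$ (whose star graph has every vertex of degree at most~$2$, contradicting the $3$-regularity built into an $(m,k,\nu)$-special presentation via Corollary~\ref{cor:mkalphacharacterisation}), we may assume $w$ is not a proper power. Since $k=4$ is even, part~(b) of Theorem~\ref{thm:2kalphanonredpositive} applies: $P$ is $(2,4,\nu)$-special if and only if $n=8$, $\nu=2$, and $\mathcal{Q}=\{1,3,5,7\}$.

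Next I would compute $\mathcal{Q}$ directly. The four length-$2$ cyclic subwords $x_0x_{q_1}$, $x_{q_1}x_{q_1+q_2}$, $x_{q_1+q_2}x_{q_1+q_2+q_3}$, $x_{q_1+q_2+q_3}x_0$ contribute the subscript differences $q_1,q_2,q_3,-(q_1+q_2+q_3)\bmod 8$. Equating this multiset with $\{1,3,5,7\}$ forces $q_1,q_2,q_3$ to be three mutually distinct odd elements of $\{1,3,5,7\}$; and since $1+3+5+7\equiv 0\pmod 8$, the fourth value $-(q_1+q_2+q_3)\bmod 8$ is automatically the remaining odd residue. Hence for each $q_1\in\{1,3,5,7\}$ the ordered pair $(q_2,q_3)$ ranges over the six ordered pairs of distinct elements of $\{1,3,5,7\}\setminus\{q_1\}$. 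In terms of $q_1$ these six pairs are $(3q_1,5q_1)$, $(7q_1,5q_1)$, $(5q_1,3q_1)$, $(7q_1,3q_1)$, $(3q_1,7q_1)$, $(5q_1,7q_1)$ mod~$8$, recovering exactly the partition into sub-cases~(a), (b), (c) given in the statement.

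It remains to identify the group up to isomorphism in each sub-case. The tools available are: (i)~replacing a relator by a cyclic permutation of itself and/or applying the shift $\theta$, both of which leave the cyclic presentation unchanged as a set of relators; and (ii)~automorphisms of $F_n$ of the form $x_i\mapsto x_{ai}$ with $a\in(\Z/8)^\times$, which carry $P_n(w)$ to another cyclic presentation defining an isomorphic group. The automorphism $x_i\mapsto x_{q_1 i}$ reduces every case to $q_1=1$. For $q_1=1$ one then checks by direct computation that the two sub-cases within each of (a), (b), (c) are equivalent: for example, in~(a), cyclic-shifting $x_0x_1x_0x_5$ and applying $\theta^{3}$ yields $x_0x_3x_4x_3$, which is carried by $x_i\mapsto x_{3i}$ to $x_0x_1x_4x_1$; analogous three-step verifications using $x_i\mapsto x_{5i}$ handle (b) and (c). The main obstacle is the supplementary claim in~(a) that $G_8(x_0x_1x_4x_1)$ contains $F_2\times F_2$; this does not follow from the star-graph formalism and requires a separate argument, for instance by producing an explicit quotient onto $F_2\times F_2$ (analogous to the commutator presentation in Example~\ref{ex:specialcyclicpresentations}(d)) or an action on a product of two regular trees in the spirit of Burger--Mozes.
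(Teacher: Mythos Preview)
Your classification argument is essentially the paper's: apply Theorem~\ref{thm:2kalphanonredpositive} at $k=4$ to force $n=8$, $\nu=2$, $\mathcal{Q}=\{1,3,5,7\}$, read off the six ordered pairs $(q_2,q_3)$, and normalise via the unit automorphism $x_i\mapsto x_{q_1^{-1}i}$ (in $(\Z/8)^\times$ every unit is self-inverse, so your $x_i\mapsto x_{q_1 i}$ is the same map). The paper carries out exactly this, and your reductions within each of (a), (b), (c) mirror its ``subtract $1$ from subscripts, multiply by $3$ or $5$, cyclically permute'' moves.

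The genuine gap is the $F_2\times F_2$ claim in~(a), which you flag but do not resolve. One of your proposed routes is wrong in principle: producing a \emph{quotient} of $G_8(x_0x_1x_4x_1)$ onto $F_2\times F_2$ would not show that $F_2\times F_2$ embeds as a \emph{subgroup} (e.g.\ $F_4$ surjects onto $F_2\times F_2$ yet, being free, contains no copy of it). The paper settles this by a GAP computation: it exhibits an explicit index-$4$ subgroup, obtains a presentation for it via Reidemeister--Schreier, observes that four of the generators $a,b,c,d$ already satisfy the commutator relations $[a,b],[b,c],[c,d],[d,a]$, and that killing the remaining generators leaves exactly the presentation $\langle a,b,c,d\mid [a,b],[b,c],[c,d],[d,a]\rangle\cong F_2\times F_2$; the subgroup $\langle a,b,c,d\rangle$ is then the claimed copy. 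Your Burger--Mozes/product-of-trees suggestion is in the right spirit but would need substantial extra work; the paper's computer-assisted route is the efficient one here.
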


\begin{proof}
Let $q_4=-(q_1+q_2+q_3)$~mod~$n$. By Theorem~\ref{thm:2kalphanonredpositive} we have $\nu=2$, $n=8$, $\{q_1,q_2,q_3,q_4\}=\{1,3,5,7\}$. Since $q_1\in\{1,3,5,7\}$ it has a multiplicative inverse mod~$n$, $q_1^{-1}\in \{1,3,5,7\}$. Define $Q_2=q_1^{-1}q_2$, $Q_3=q_1^{-1}q_3$, $Q_4=q_1^{-1}q_4$~mod~$n$. This implies that $(Q_2,Q_3)= (3,5),(3,7),(5,3),(5,7),(7,3),$ or $(7,5)$, and hence the stated values of $q_2,q_3$. Then by multiplying the subscripts of generators $x_i$ by $q_1^{-1}$ we have $G\cong G_n(x_0x_1x_{1+Q_2}x_{1+Q_2+Q_3})$. In the cases $(Q_2,Q_3)= (7,5),(7,3),(5,7)$ the isomorphism to the stated group comes about by subtracting $1$ from the subscripts of generators, multiplying them by 3 or 5 (mod~$8$), and cyclically permuting.

It remains to show that $G=G_8(x_0x_1x_4x_1)$ contains a subgroup isomorphic to $F_2 \times F_2$. A computation in GAP~\cite{GAP} shows that the subgroup of $G$ generated by the set of elements
\[\{x_0^{2}, x_1^{2}, x_3^{2}, x_2, x_6, x_4x_0^{-1}, x_4^{-1}x_0^{-1}, x_5x_1^{-1}, x_7x_3^{-1}, x_0x_1x_3^{-1}, x_0x_1^{-1}x_3^{-1}\}\]
is of index~4 and has a presentation whose set of generators contains elements $a,b,c,d$ and whose set of relators contains the commutators $[a,b],[b,c],[c,d],[d,a]$ and that the quotient obtained by killing all other generators has precisely these four generators and four relators, so defines $F_2\times F_2$. Therefore $a,b,c,d$ generate a subgroup of $G$ that is isomorphic to $F_2\times F_2$.
\end{proof}

Note that the groups in Corollary~\ref{cor:24alphapositive} are large by Corollary~\ref{cor:disconnectedlarge}. We have been unable to determine if the groups in parts~(b),(c) contain a subgroup isomorphic to $F_2\times F_2$.

\subsection{The alternating case}\label{sec:nonredundant2kalphaalternating}

\begin{maintheorem}\label{thm:2kalphanonredalternating}
Let $w$ be a cyclically reduced, alternating word of (even) length $k\geq 4$, let $\mathcal{A},\mathcal{B}$ be the multisets defined at~(\ref{eq:ABQ}) and suppose that $P_n(w)$ is irreducible and non-redundant. Then $P_n(w)$ is $(2,k,\nu)$-special if and only if $n=2k$, $\nu=2$, and $\mathcal{A}, \mathcal{B}$ each consist of $k/2$ odd integers and if $a\in \mathcal{A}$ then $n-a \not \in \mathcal{A}$ and if $b\in \mathcal{B}$ then $n-b \not \in \mathcal{B}$.
\end{maintheorem}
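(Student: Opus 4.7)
The strategy is to apply Theorem~\ref{thm:components}(b) together with Corollary~\ref{cor:mkalphacharacterisation}(b) so that the $(2,k,\nu)$-special condition reduces to asking when the circulant graphs $\mathrm{circ}_{n/d_\mathcal{A}}(\{a/d_\mathcal{A}\ (a\in\mathcal{A})\})$ and $\mathrm{circ}_{n/d_\mathcal{B}}(\{b/d_\mathcal{B}\ (b\in\mathcal{B})\})$ describing the components of the star graph $\Gamma$ are each isomorphic to $K_{k,k}$. Since $w$ is alternating, consecutive letters must have opposite signs, so $\mathcal{Q}=\emptyset$, $k$ is necessarily even, and $|\mathcal{A}|=|\mathcal{B}|=k/2$.

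For the forward direction, each component of $\Gamma$ is isomorphic to $K_{k,k}$, which has $2k$ vertices, so comparing vertex counts gives $n/d_\mathcal{A}=n/d_\mathcal{B}=2k$ and in particular $d_\mathcal{A}=d_\mathcal{B}$. Irreducibility of $P_n(w)$ translates to $\gcd(n,a,b\ (a\in\mathcal{A},b\in\mathcal{B}))=1$, so $\gcd(d_\mathcal{A},d_\mathcal{B})=1$; combined with $d_\mathcal{A}=d_\mathcal{B}$ this forces $d_\mathcal{A}=d_\mathcal{B}=1$, whence $n=2k$ and $\nu=2$. I next determine which subsets $\mathcal{A}\subseteq\Z_{2k}$ make $\mathrm{circ}_{2k}(\mathcal{A})$ isomorphic to $K_{k,k}$: since $K_{k,k}$ is connected and has a unique bipartition into parts of size $k$, and the $\Z_{2k}$-shift action on the circulant preserves that bipartition, the two parts must coincide with the even- and odd-indexed vertices, forcing every element of $\mathcal{A}$ to be odd. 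Simplicity combined with $k$-regularity then force $\mathcal{A}\cup(-\mathcal{A})$ to equal the set of all $k$ odd residues mod $2k$ with $\mathcal{A}\cap(-\mathcal{A})=\emptyset$, which translates to the stated conditions on $\mathcal{A}$, and the argument for $\mathcal{B}$ is identical.

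For the converse, I first claim the conditions imply $d_\mathcal{A}=d_\mathcal{B}=1$: if $d$ is any odd divisor of $n=2k$, then $\Z_{2k}$ contains exactly $k/d$ odd multiples of $d$, and because $k$ is even no odd residue mod $2k$ equals its own negative, so these $k/d$ multiples partition into $k/(2d)$ inverse pairs; any subset of odd multiples of $d$ of size $k/2$ containing no inverse pair must therefore satisfy $k/2\le k/(2d)$, forcing $d=1$. Consequently $\Gamma$ has exactly $\nu=2$ components, namely $\mathrm{circ}_{2k}(\mathcal{A})$ and $\mathrm{circ}_{2k}(\mathcal{B})$. Each is bipartite on the even/odd partition of its vertex set (all shifts being odd), is $k$-regular because $|\mathcal{A}\cup(-\mathcal{A})|=k$ and $0\notin\mathcal{A}\cup(-\mathcal{A})$, and joins every opposite-parity pair of vertices because $\mathcal{A}\cup(-\mathcal{A})$ exhausts the odd residues mod $2k$; hence each equals $K_{k,k}$, and Corollary~\ref{cor:mkalphacharacterisation}(b) completes the proof.

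The main obstacle is the characterisation of when $\mathrm{circ}_{2k}(\mathcal{A})\cong K_{k,k}$: one must argue via the uniqueness of the $K_{k,k}$ bipartition and the vertex-transitivity of the circulant action that the bipartition is precisely even versus odd residues, and then package the ``simple and $k$-regular'' constraints into the exact set-theoretic description of $\mathcal{A}$. The pigeonhole argument ruling out $d_\mathcal{A}>1$ in the sufficiency direction is the other delicate point, but it follows cleanly from $|\mathcal{A}|=k/2$ and the no-inverse-pair hypothesis.
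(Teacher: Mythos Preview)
Your proof is correct and follows essentially the same approach as the paper's: both invoke Theorem~\ref{thm:components}(b) and Corollary~\ref{cor:mkalphacharacterisation}(b) to reduce to showing $d_\mathcal{A}=d_\mathcal{B}=1$ via irreducibility, and then characterise when the circulant $\mathrm{circ}_{2k}(\mathcal{A})$ equals $K_{k,k}$. The only differences are in two minor sub-arguments: for the necessity direction the paper deduces oddness of the elements of $\mathcal{A}$ by exhibiting an odd closed path (using that $1$ or $n-1\in\mathcal{A}$) whereas you argue via uniqueness of the bipartition under the shift action, and for the sufficiency direction the paper notes directly that $1$ or $n-1\in\mathcal{A}$ to obtain $d_\mathcal{A}=1$ whereas you use a pigeonhole count on odd multiples of $d$.
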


\begin{proof}
Let $\Gamma$ be the star graph of $P_n(w)$ and let $\Gamma^+,\Gamma^-$ be the induced subgraphs of $\Gamma$ whose vertex sets are the positive and negative vertices of $\Gamma$, respectively, and let $d_\mathcal{A}=\mathrm{gcd}(n, a\ (a\in\mathcal{A})),d_\mathcal{B}=\mathrm{gcd}(n, b\ (b\in\mathcal{B}))$.

Suppose first that the given conditions hold. These imply that either $1$ or $n-1\in\mathcal{A}$ so $d_\mathcal{A}=1$ and hence, by Theorem~\ref{thm:components}, $\Gamma^+$ is the circulant graph $\mathrm{circ}_n(\mathcal{A})$, which is the complete bipartite graph $K_{n/2,n/2}$ with vertex partition $\{x_0,x_2,\ldots,x_{n-2}\}\cup \{x_1,x_3,\ldots ,x_{n-1}\}$. Similarly $\Gamma^-$ is the circulant graph $\mathrm{circ}_n(\mathcal{B})$, which is the complete bipartite graph $K_{n/2,n/2}$ with vertex partition $\{x_0^{-1},x_2^{-1},\ldots,x_{n-2}^{-1}\}\cup \{x_1^{-1},x_3^{-1},\ldots ,x_{n-1}^{-1}\}$. Thus $P_n(w)$ is $(2,n/2,2)$-special.

Suppose then that $P_n(w)$ is $(2,k,\nu)$-special. Then by Corollary~\ref{cor:mkalphacharacterisation} each component of $\Gamma$ is the complete bipartite graph $K_{k,k}$. Then Theorem~\ref{thm:components} implies $d_\mathcal{A}=d_\mathcal{B}$, and since $P_n(w)$ is irreducible $1=\mathrm{gcd}(n, a\ (a \in\mathcal{A}), b\ (b \in\mathcal{B}))=(d_\mathcal{A},d_\mathcal{B})$ so $d_\mathcal{A}=d_\mathcal{B}=1$. Thus $\Gamma$ has 2 components so $\nu=2$, and each of these components must therefore be the complete bipartite graph $K_{n/2,n/2}$, and hence $k=n/2$. If $a\equiv \pm a'$~mod~$n$ for some $a,a'\in\mathcal{A}$ then $\Gamma$ contains a reduced closed path of length 2, contradicting the girth, so $a\not \equiv \pm a'$~mod~$n$ for all $a,a'\in \mathcal{A}$, and similarly $b\not \equiv \pm b'$~mod~$n$ for all $b,b'\in \mathcal{B}$. If $\mathcal{A}$ contains an even element, $a$, say then since either $1$ or $n-1\in \mathcal{A}$ the graph $\Gamma^+$ contains a closed path
\( x_0-x_1-x_2-\cdots -x_a-x_0\)
of length $a+1$, which is odd, a contradiction (since $\Gamma$ is bipartite). Therefore all elements of $\mathcal{A}$ are odd, and similarly all elements of $\mathcal{B}$ are odd, as required.
\end{proof}

Restricting to the Euclidean case, we now classify the non-redundant $(2,4,\nu)$-special cyclic presentations.

\begin{corollary}\label{cor:24alphaalternating}
Let $P=P_n(x_0x_{a_1}^{-1}x_{a_1+b_1}x_{a_1+b_1+a_2}^{-1})$ be an irreducible and non-redundant cyclic presentation, where $0\leq a_1,b_1,a_2<n$ and let $G$ be the group defined by $P$. Then $P$ is $(2,4,\nu)$-special if and only if $n=8$, $\nu=2$ and one of the following holds:
\begin{itemize}
  \item[(a)] ($a_2\equiv 5a_1$, $b_1\equiv 3a_1$~mod~$n$) or ($a_2\equiv 5a_1$, $b_1\equiv 7a_1$~mod~$n$), in which case $G\cong G_8(x_0x_1x_4x_1)$;
  \item[(b)] ($a_2\equiv 3a_1$, $b_1\equiv 5a_1$~mod~$n$) or ($a_2\equiv 3a_1$, $b_1\equiv 7a_1$~mod~$n$), in which case $G\cong G_8(x_0x_1x_6x_1)$;
  \item[(c)] ($a_2\equiv 3a_1$, $b_1\equiv a_1$~mod~$n$) or ($a_2\equiv 3a_1$, $b_1\equiv 3a_1$~mod~$n$), in which case $G\cong G_8(x_0x_1x_4x_3)$.
\end{itemize}
\end{corollary}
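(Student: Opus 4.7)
The plan is to follow the same strategy as Corollary~\ref{cor:24alphapositive}, using Theorem~\ref{thm:2kalphanonredalternating} in place of Theorem~\ref{thm:2kalphanonredpositive}. For the word $w = x_0 x_{a_1}^{-1} x_{a_1+b_1} x_{a_1+b_1+a_2}^{-1}$, I would first read off the multisets $\mathcal{A} = \{a_1, a_2\}$ and $\mathcal{B} = \{b_1, b_2\}$ (with $b_2 \equiv -(a_1+b_1+a_2) \pmod{n}$). Theorem~\ref{thm:2kalphanonredalternating} then forces $n = 8$, $\nu = 2$, and $a_1, a_2, b_1, b_2 \in \{1,3,5,7\}$ with $a_1 \not\equiv a_2$, $b_1 \not\equiv b_2$, and $a_1 \not\equiv -a_2 \pmod{8}$. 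Since $a_1$ is a unit mod $8$, multiplying all subscripts by $a_1^{-1}$ (an isomorphism of cyclic presentations of the type used in Corollary~\ref{cor:24alphapositive}) allows me to assume $a_1 = 1$; the remaining constraints force $a_2 \in \{3,5\}$, and examining the admissible $b_1$'s via $2b_1 \not\equiv -(a_1+a_2) \pmod{8}$ partitions the solutions into the three listed cases.

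The main tool for identifying each group is the involution $\phi \in \mathrm{Aut}(F_8)$ defined by $\phi(x_i) = x_i$ for $i$ even and $\phi(x_i) = x_i^{-1}$ for $i$ odd. Because the positive letters of $w$ sit at subscripts of even parity (namely $0$ and $a_1+b_1$, as $a_1, b_1$ are odd) and the negative letters at subscripts of odd parity, $\phi(w)$ equals the positive word $v = y_0 y_{a_1} y_{a_1+b_1} y_{a_1+b_1+a_2}$. A direct computation shows $\phi(\theta^i w) = \theta^i(\phi(w))$ for $i$ even, while for $i$ odd $\phi(\theta^i w)$ is the inverse of the reversal of $\theta^i(\phi(w))$; hence $\phi$ induces an isomorphism of cyclic presentations $P_8(w) \cong P_8(v)$ precisely when the cyclic word $v$ coincides with its own cyclic reversal. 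A short analysis of the four possible cyclic rotations shows that, given all of $a_1, b_1, a_2$ are odd, this reduces to $a_1+b_1 \equiv 0$ or $b_1+a_2 \equiv 0 \pmod{8}$. In case~(a), $b_1 = 3a_1$ satisfies $b_1+a_2 \equiv 8a_1 \equiv 0$ and $b_1 = 7a_1$ satisfies $a_1+b_1 \equiv 0$, so $\phi$ produces a positive cyclic presentation with $(q_1, q_2, q_3) = (1, b_1, 5)$, which Corollary~\ref{cor:24alphapositive} identifies as defining $G_8(x_0 x_1 x_4 x_1)$. Case~(b) is entirely analogous and yields $G \cong G_8(x_0 x_1 x_6 x_1)$.

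The main obstacle is case~(c): here neither $a_1+b_1$ nor $b_1+a_2$ vanishes mod $8$, so $\phi$ fails to furnish a cyclic presentation of $G$; moreover a short computation shows the shift extensions $G_8(w) \rtimes \Z_8$ and $G_8(x_0 x_1 x_4 x_3) \rtimes \Z_8$ have non-isomorphic abelianizations ($\Z \oplus \Z/8$ versus $\Z/4 \oplus \Z/8$), so no shift-equivariant isomorphism can exist. I would first observe that the two subcases $b_1 \in \{1, 3\}$ define isomorphic groups---they are interchanged by inverting $w$ and then renormalizing subscripts by a unit---reducing the task to a single representative such as $w = x_0 x_1^{-1} x_2 x_5^{-1}$. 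The identification $G \cong G_8(x_0 x_1 x_4 x_3)$ would then be established by an explicit Tietze computation, computer-assisted if necessary, in the spirit of the GAP verification invoked in Corollary~\ref{cor:24alphapositive}.
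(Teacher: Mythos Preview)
Your classification argument and the treatment of cases~(a) and~(b) via the involution $\phi$ (inverting the odd-indexed generators) coincide with the paper's approach; your analysis of exactly when $\phi$ carries the relator set of $P_8(w)$ to that of $P_8(\phi(w))$ is in fact more careful than the paper's, which simply asserts it.

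The gap is in case~(c). You correctly note that $\phi$ alone fails there and that the distinct abelianisations of the shift extensions rule out any shift-equivariant isomorphism. But appealing to an unspecified ``Tietze computation, computer-assisted if necessary'' is not a proof: the isomorphism problem is undecidable, and the GAP calls in Corollary~\ref{cor:24alphapositive} verify structural facts (existence of a subgroup, an abelianisation), not an isomorphism search. The paper resolves~(c) by an explicit, elementary device you missed: compose $\phi$ with the further generator permutation $x_2\leftrightarrow x_6$, $x_3\leftrightarrow x_7$. The resulting automorphism $\psi$ of $F_8$ sends $x_0x_1^{-1}x_2x_5^{-1}$ to $x_0x_1x_6x_5$, and a direct check over all eight shifts shows $\psi(\theta^i w)$ is always a cyclic conjugate of some $\theta^j(x_0x_1x_6x_5)^{\pm 1}$, giving $G_8(x_0x_1^{-1}x_2x_5^{-1})\cong G_8(x_0x_1x_6x_5)$; the latter is identified with $G_8(x_0x_1x_4x_3)$ via Corollary~\ref{cor:24alphapositive}. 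So the missing idea is that when $\phi$ fails to respect the cyclic relator set, it can be repaired by composing with an additional (non-shift-commuting) permutation of the generators.
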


\begin{proof}
If $w=x_0x_{a_1}^{-1}x_{a_1+b_1}x_{a_1+b_1+a_2}^{-1}$ is a proper power then $w=(x_0x_{a_1}^{-1})^2$, in which case each vertex of the star graph of $P_n(w)$ has degree 2, so $P_n(w)$ is not $(2,4,\nu)$-special. Hence we may assume that $w$ is not a proper power. Let $b_2=-(a_1+b_1+a_2)$~mod~$n$. By Theorem~\ref{thm:2kalphanonredalternating} the presentation $P_n(w)$ is $(2,4,\nu)$-special if and only if $\nu=2$, $n=8$, $a_1,a_2,b_1,b_2$ are odd, and $a_1\not \equiv \pm a_2$, $b_1\not \equiv \pm b_2$~mod~$8$. Since $a_1\in\{1,3,5,7\}$ it has a multiplicative inverse mod~$n$, $a_1^{-1}\in \{1,3,5,7\}$. Define $B_1=a_1^{-1}b_1$, $A_2=a_1^{-1}a_2$, $B_2=a_1^{-1}b_2$~mod~$n$. By multiplying the subscripts of generators $x_i$ by $a_1^{-1}$ we have $G\cong G_n(x_0x_1^{-1}x_{1+B_1}x_{1+B_1+A_2}^{-1})$. Further $B_1,A_2,B_2 \in \{1,3,5,7\}$ and $A_2\not \equiv \pm 1$, $B_1\not \equiv \pm B_2$ and $1+B_1+A_2+B_2\equiv 0$~mod~$n$. This implies $(A_2,B_1)= (3,1),(3,3),(3,5),(3,7),(5,3),$ or $(5,7)$, and hence the stated values of $a_2,b_1$. In parts~(a),(b) isomorphisms $G_8(x_0x_1^{-1}x_6x_1^{-1})\cong G_8(x_0x_1x_6x_1)$, $G_8(x_0x_1^{-1}x_0x_3^{-1})\cong G_8(x_0x_1x_0x_3)$, $G_8(x_0x_1^{-1}x_4x_1^{-1})\cong G_8(x_0x_1x_4x_1)$, $G_8(x_0x_1^{-1}x_0x_5^{-1})\cong G_8(x_0x_1x_0x_5)$ are obtained by replacing each odd numbered generator by its inverse. Then $G_8(x_0x_1x_6x_1)\cong G_8(x_0x_1x_0x_3)$ and $G_8(x_0x_1x_4x_1)\cong G_8(x_0x_1x_0x_5)$ as in Corollary~\ref{cor:24alphapositive}. In part~(c) the isomorphism $G_8(x_0x_1^{-1}x_2x_5^{-1})\cong G_8(x_0x_1^{-1}x_4x_7^{-1})$ is obtained by inverting the relators and negating the subscripts. Then \linebreak $G_8(x_0x_1^{-1}x_2x_5^{-1})\cong G_8(x_0x_1x_6x_5)$ is obtained by inverting the odd numbered generators and interchanging $x_2$ and $x_6$ and interchanging $x_3$ and $x_7$ and then $G_8(x_0x_1x_6x_5)\cong G_8(x_0x_1x_4x_3)$ as in the proof of Corollary~\ref{cor:24alphapositive}.
\end{proof}

Note that the groups appearing in Corollary~\ref{cor:24alphaalternating} are the same as those in Corollary~\ref{cor:24alphapositive}, in particular they are large and $G_8(x_0x_1x_4x_1)$ contains a subgroup isomorphic to $F_2\times F_2$. We now show that all the groups defined by the $(2,k,\nu)$-special presentations in Theorem~\ref{thm:2kalphanonredalternating} are large.

\begin{corollary}\label{cor:alternatinglargeness}
Let $P_n(w)$ be a non-redundant and irreducible $(2,k,\nu)$-special cyclic presentation where $w$ is an alternating word of length at least 4. Then the cyclically presented group $G_n(w)$ defined by $P_n(w)$ is large.
\end{corollary}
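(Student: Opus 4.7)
The plan is to construct a surjection from $G_n(w)$ onto a group that is easily seen to be large. By Theorem~\ref{thm:2kalphanonredalternating} we have $n=2k$ and every element of $\mathcal{A}\cup\mathcal{B}$ is odd. Since $w$ is alternating and $x_0$ is a letter of $w$, consecutive subscripts in $w$ differ by an odd number, and combined with the alternating sign pattern this forces every positive letter of $w$ to have even subscript and every negative letter of $w$ to have odd subscript.

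Next I would define a map $\phi\colon F_n\to Q:=\langle a,b\mid (ab^{-1})^{k/2}\rangle$ by $\phi(x_i)=a$ if $i$ is even and $\phi(x_i)=b$ if $i$ is odd, which is well defined modulo $n$ since $n=2k$ is even. The parity analysis above gives $\phi(w)=(ab^{-1})^{k/2}=1$ in $Q$. Applying $\theta$ swaps the parities of all subscripts but preserves the alternating sign pattern, so $\phi(\theta(w))=(ba^{-1})^{k/2}$, which is the inverse of $\phi(w)$ and hence also trivial in $Q$. Since shifting by $2$ preserves parities, $\phi(\theta^j(w))$ equals $(ab^{-1})^{\pm k/2}$ for every $j$, and so $\phi$ descends to a surjection $G_n(w)\twoheadrightarrow Q$.

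The substitution $c=ab^{-1}$, $b=c^{-1}a$ identifies $Q$ with $\langle a,c\mid c^{k/2}\rangle\cong \Z\ast\Z_{k/2}$. Since $k\geq 4$ we have $k/2\geq 2$, and a Reidemeister--Schreier (or Kurosh) computation shows that the kernel of the retraction $Q\to\Z_{k/2}$ sending $a\mapsto 0$, $c\mapsto 1$ is free of rank $k/2\geq 2$. Hence $Q$ contains a non-abelian free subgroup of finite index and is therefore large, and consequently so is $G_n(w)$.

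The only subtle step is extracting the parity structure of $w$ from Theorem~\ref{thm:2kalphanonredalternating}; after that, constructing $\phi$, checking it is well defined on the shifted relators, and recognising $Q$ as a free product are essentially immediate. The hypothesis $k\geq 4$ is used precisely to guarantee $k/2\geq 2$, so that $Q$ is not virtually cyclic and the retraction has non-abelian free kernel.
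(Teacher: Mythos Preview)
Your proof is correct and takes a genuinely different route from the paper's.

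The paper passes to the shift extension $E=G_n(w)\rtimes_\theta\Z_n=\pres{x,t}{t^{2k},\prod_{i=1}^{k/2}(xt^{a_i}x^{-1}t^{b_i})}$, uses the fact that each $a_i,b_i$ is odd to map $E$ onto the generalized triangle group $\pres{x,t}{x^7,t^2,(xtx^{-1}t)^{k/2}}$, and then invokes the Baumslag--Morgan--Shalen theorem to conclude largeness when $k/2\geq 3$; the case $k=4$ is handled separately via Corollary~\ref{cor:24alphaalternating} and Corollary~\ref{cor:disconnectedlarge}. Your argument instead works directly with $G_n(w)$, exploiting the parity pattern forced by Theorem~\ref{thm:2kalphanonredalternating} to produce an epimorphism onto $\Z*\Z_{k/2}$, which is visibly large for all $k/2\geq 2$. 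This is more elementary (no shift extension, no appeal to~\cite{BMS}) and uniform (no case split at $k=4$).

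One small imprecision: from ``$x_0$ is a letter of $w$'' you cannot conclude that $x_0$ occurs with positive exponent, so you only know that the positive letters of $w$ all have subscripts of one fixed parity and the negative letters the other. This does not affect the argument, since in the remaining case $\phi(w)=(ba^{-1})^{k/2}=(ab^{-1})^{-k/2}$, which is still trivial in $Q$; you might note this explicitly.
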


\begin{proof}
Let $w=x_0x_{a_1}^{-1}x_{a_1+b_1}x_{a_1+b_1+a_2}^{-1}\ldots x_{\sum_{i=1}^{k/2-1}(a_i+b_i)}x_{\sum_{i=1}^{k/2-1}(a_i+b_i)+a_{k/2}}^{-1}$ and let $b_{k/2}=-\sum_{i=1}^{k/2-1}(a_i+b_i)-a_{k/2}$~mod~$n$. Then the shift extension $E=G_n(w)\rtimes_\theta \Z_n$ has a presentation \linebreak $E=\pres{x,t}{t^{2k},\prod_{i=1}^{k/2} (xt^{a_i}x^{-1}t^{b_i})}$ where each $a_i,b_i$ is odd, so $E$ maps onto the generalized triangle group $T=\pres{x,t}{x^7, t^{2}, (xtx^{-1}t)^{k/2}}$ which,  by~\cite{BMS}, is large if $k/2\geq 3$. Thus we may assume $k=4$, in which case $G$ is one of the groups in Corollary~\ref{cor:24alphaalternating}, which are large by Corollary~\ref{cor:disconnectedlarge}.
\end{proof}

\subsection{The non-positive, non-negative, non-alternating case}\label{sec:nonredundant2kalphanonpositivenonalternating}

\begin{maintheorem}\label{thm:2kalphanonrednonpositivenonalternating}
Let $w$ be a cyclically reduced word that is non-positive, non-negative, and non-alternating and not a proper power and let $\mathcal{A},\mathcal{B},\mathcal{Q},\mathcal{Q}^+,\mathcal{Q}^-$ be the multisets defined at~(\ref{eq:ABQ}) and suppose that $P_n(w)$ is irreducible and non-redundant. Then $P_n(w)$ is $(2,k,\nu)$-special if and only if the following hold:
\begin{itemize}
  \item[(a)] $n=\nu k$, and $k$ is divisible by $4$;
  \item[(b)] $\mathcal{A},\mathcal{B}\subset \{\nu,3\nu, \ldots , (k-1)\nu\}$ such that $|\mathcal{A}|=|\mathcal{B}|=k/4$ and if $a\in \mathcal{A}$ then $n-a\not \in \mathcal{A}$ and if $b\in \mathcal{B}$ then $n-b\not \in \mathcal{B}$;
  \item[(c)] $\mathcal{Q}^+\cap \mathcal{Q}^-=\emptyset$ and $\mathcal{Q}=\{s+\nu,s+3\nu,\ldots, s+(k-1)\nu\}$ for some $0\leq s<n$ such that $\mathrm{gcd}(s,\nu)=1$.
\end{itemize}
\end{maintheorem}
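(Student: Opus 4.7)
The plan is to leverage Corollary~\ref{cor:mkalphacharacterisation}(b), which identifies each component of $\Gamma$ as $K_{k,k}$, together with the explicit description of components from Theorem~\ref{thm:components}(a), to extract the combinatorial constraints on $\mathcal{A}, \mathcal{B}, \mathcal{Q}$ imposed by the bipartite-complete structure. For the forward direction, I would first invoke Corollary~\ref{cor:mkalphacharacterisation}(b) to conclude $n = \nu k$, then use $\mathrm{girth}(\Gamma) = 4$ (no multi-edges) to force $\mathcal{A}, \mathcal{B}, \mathcal{Q}^+, \mathcal{Q}^-$ to be sets with $\mathcal{A} \cap (-\mathcal{A}) = \mathcal{B} \cap (-\mathcal{B}) = \mathcal{Q}^+ \cap \mathcal{Q}^- = \emptyset$, since any violation would produce two distinct subwords of $\tilde R$ yielding parallel edges at some vertex. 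A degree count at $x_0$ (each $a \in \mathcal{A}$ contributes two edges $x_0 - x_a$ and $x_0 - x_{-a}$; each $q \in \mathcal{Q}$ contributes one edge $x_0 - x_q^{-1}$) gives $2|\mathcal{A}| + |\mathcal{Q}| = k$; comparing with $|\mathcal{A}| + |\mathcal{B}| + |\mathcal{Q}| = l(w) = k$ yields $|\mathcal{A}| = |\mathcal{B}|$. Theorem~\ref{thm:components}(a) parametrises each component as $V(\Gamma_j^+) = \{x_{j+t\nu} : 0 \le t < k\}$ and $V(\Gamma_j^-) = \{x_{j+q_0+t\nu}^{-1} : 0 \le t < k\}$, forcing $\mathcal{A}, \mathcal{B} \subset \nu\Z_n$ and $\mathcal{Q} \subset q_0 + \nu\Z_n$.

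The key geometric step is that $\Gamma_j^+$ is the circulant $\mathrm{circ}_k(\{a/\nu : a \in \mathcal{A}\})$, and as the induced subgraph of $K_{k,k}$ on a vertex-transitive set it is a balanced complete bipartite $K_{k/2,k/2}$. This forces $\{a/\nu\}$ to consist of odd residues with $\{a/\nu\} \cup (-\{a/\nu\}) = \{1, 3, \ldots, k-1\}$, giving $|\mathcal{A}| = k/4$ and excluding $k/2$ (which would be self-paired if odd), hence $4 \mid k$; the argument for $\mathcal{B}$ is analogous. This establishes (a) and (b). For (c), the global bipartition of $\Gamma_j$ must respect the forced parity-splits $V(\Gamma_j^+) = A_+ \sqcup B_+$ and $V(\Gamma_j^-) = A_- \sqcup B_-$ (even vs.\ odd index $t$), so takes one of two forms; the ``homogeneous'' pairing $A_+ \cup A_-$, $B_+ \cup B_-$ is ruled out because $q_0 \in \mathcal{Q}$ yields an edge $x_0 - x_{q_0}^{-1}$ with both endpoints at index $t = 0$ (hence in $A$). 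The ``heterogeneous'' pairing $U_1 = A_+ \cup B_-, U_2 = B_+ \cup A_-$ then forces $(q - q_0)/\nu$ to be even for every $q \in \mathcal{Q}$, and $|\mathcal{Q}| = k/2$ forces equality $\mathcal{Q} = \{q_0 + 2j\nu : 0 \le j < k/2\}$; setting $s = q_0 - \nu$ recovers the form in (c), and irreducibility reduces to $\gcd(s, \nu) = 1$.

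For the converse, assuming (a)--(c), I would compute the number of components of $\Gamma$ via Theorem~\ref{thm:components}(a) as $d = \gcd(n, \mathcal{A}, \mathcal{B}, q - q_0)$; conditions (b), (c) make every such element divisible by $\nu$ so $\nu \mid d$, while taking any $a = (2r+1)\nu \in \mathcal{A}$ together with the difference $2\nu$ appearing in $\mathcal{Q}$ forces $d \mid \gcd((2r+1)\nu, 2\nu) = \nu$, hence $d = \nu$. The degree identity $2|\mathcal{A}| + |\mathcal{Q}| = k$ gives $k$-regularity. Defining $U_1 = A_+ \cup B_-, U_2 = B_+ \cup A_-$ within each component, (b) and (c) verify that every $\mathcal{A}$-, $\mathcal{B}$- and $\mathcal{Q}$-edge goes between $U_1$ and $U_2$. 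Completeness then follows by cyclic symmetry: $|\mathcal{A}| = k/4$ with $\mathcal{A} \cap (-\mathcal{A}) = \emptyset$ forces $\mathcal{A} \cup (-\mathcal{A}) = \{\nu, 3\nu, \ldots, (k-1)\nu\}$, making $B_+$ the full positive neighbourhood of $x_0$, and the form of $\mathcal{Q}$ in (c) makes $A_-$ the full negative neighbourhood of $x_0$, so $x_0$ is adjacent to all $k$ vertices of $U_2$, extending to every vertex via shifts. The hardest step will be the ``homogeneous vs.\ heterogeneous'' bipartition dichotomy: one must carefully track how the parity of the index $t$ in Theorem~\ref{thm:components}(a)'s parametrisation interacts with the parity-bipartition of the $\mathcal{A}$-circulant, then use $q_0 \in \mathcal{Q}$ to anchor the correct pairing and thereby force the arithmetic progression structure of $\mathcal{Q}$ in (c); the remaining steps reduce to circulant-graph bookkeeping and elementary gcd arguments.
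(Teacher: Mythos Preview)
Your plan is correct and follows the same overall strategy as the paper: invoke Corollary~\ref{cor:mkalphacharacterisation}(b) for $n=\nu k$ and $\Gamma_j\cong K_{k,k}$, use the component description of Theorem~\ref{thm:components}(a), analyse the induced circulant on positive (and negative) vertices to pin down $\mathcal{A},\mathcal{B}$, and then read off $\mathcal{Q}$ from the global bipartition. The differences are organisational rather than substantive. Where the paper argues by contradiction that $\nu$ or $n-\nu\in\mathcal{A}$ and then builds an explicit $k$-cycle in $\Gamma_0^+$ to establish the even/odd parity split, you instead observe that $\Gamma_j^+$ is an induced subgraph of $K_{k,k}$ (hence complete bipartite $K_{p,k-p}$) and use vertex-transitivity of circulants to force $p=k/2$ and the even/odd bipartition --- a slightly slicker route to the same conclusion. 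Your degree identity $2|\mathcal{A}|+|\mathcal{Q}|=k$ combined with $|\mathcal{A}|+|\mathcal{B}|+|\mathcal{Q}|=k$ gives $|\mathcal{A}|=|\mathcal{B}|$ more directly than the paper, which obtains it only after the full analysis of (b). Your ``homogeneous vs.\ heterogeneous'' dichotomy is exactly the paper's two cases on whether $x_0$ and $x_{q_0}^{-1}$ share a part of $\Gamma_0$; you additionally note (correctly) that the homogeneous case is vacuous because $q_0\in\mathcal{Q}$ already gives an edge $x_0-x_{q_0}^{-1}$, whereas the paper treats both cases and absorbs the discrepancy into the choice of $s$. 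The converse direction in both arguments is a direct verification of the bipartite-complete structure from the stated conditions.
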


\begin{proof}
Suppose first that $P_n(w)$ is $(2,k,\nu)$-special. Then by Corollary~\ref{cor:mkalphacharacterisation} $n=k\nu$ and each component of the star graph $\Gamma$ of $P_n(w)$ is the complete bipartite graph $K_{k,k}$.

In the notation of Theorem~\ref{thm:components}, $\Gamma$ has $\nu$ isomorphic components $\Gamma_j$ ($0\leq j<\nu$) where, in particular, $\Gamma_0$ is the complete bipartite graph $K_{k,k}$ with vertex set $V(\Gamma_0)=V(\Gamma_0^+)\cup V(\Gamma_0^-)$ where $\Gamma_0^+$ and $\Gamma_0^-$ are the induced labelled subgraphs of $\Gamma$ with vertex sets
\begin{alignat*}{1}
V(\Gamma_0^+) &=\{ x_0,x_\nu,\ldots, x_{(k-1)\nu}\},\\
V(\Gamma_0^-) &=\{ x_{q_0}^{-1},x_{q_0+\nu}^{-1},\ldots, x_{q_0+(k-1)\nu}^{-1}\}
\end{alignat*}
for some $q_0\in \mathcal{Q}^+\cup \mathcal{Q}^-$. Suppose for contradiction that $\nu,n-\nu \not \in \mathcal{A}$ and $\nu,n-\nu \not \in \mathcal{B}$. Then for each $0\leq i<n$, vertices $x_i,x_{i+\nu}$ are not joined by an edge and vertices $x_i^{-1},x_{i+\nu}^{-1}$ are not joined by an edge. Therefore the vertices of $V(\Gamma_0^+)$ all belong to the same part of $\Gamma_0$ and the vertices of $V(\Gamma_0^-)$ all belong to the same part of $\Gamma_0$, and hence no two positive vertices of $\Gamma$ are joined by an edge and no two negative vertices of $\Gamma$ are joined by an edge, and so $\mathcal{A}=\mathcal{B}=\emptyset$, a contradiction, since $w$ is non-positive and non-negative. Therefore $\nu$ or $n-\nu\in \mathcal{A}$ and $\nu$ or $n-\nu\in \mathcal{B}$. Thus $\Gamma_0$ contains closed paths $x_0-x_\nu-\cdots-x_{(k-1)\nu}-x_0$ and $x_{q_0}^{-1}-x_{q_0+\nu}^{-1}-\cdots-x_{q_0+(k-1)\nu}^{-1}-x_{q_0}^{-1}$ of length $k$. Since $\Gamma_0$ is bipartite, $k$ is even. Therefore the vertices $x_\nu,x_{3\nu},\ldots, x_{(k-1)\nu}$ are precisely those positive vertices of $\Gamma_0$ that belong to a different part of $\Gamma_0$ to $x_0$ (and so are neighbours of $x_0$) and the vertices
$x_{q_0+\nu}^{-1},x_{q_0+3\nu}^{-1},\ldots, x_{q_0+(k-1)\nu}^{-1}$ are precisely those negative vertices of $\Gamma_0$ that belong to a different part of $\Gamma_0$ to $x_{q_0}^{-1}$ (and so are neighbours of $x_{q_0}^{-1}$) and hence $\mathcal{A},\mathcal{B}\subset \{\nu,3\nu,\ldots , (k-1)\nu\}$. Moreover, for each odd $t$ either $t\nu$ or $(k-t)\nu \in \mathcal{A}$ (resp.\,$\mathcal{B}$), and precisely one of $t\nu$ or $(k-t)\nu \in \mathcal{A}$ (resp.\,$\mathcal{B}$), for otherwise $\Gamma_0$ contains a reduced closed path of length~2, a contradiction. Therefore $|\mathcal{A}|=|\mathcal{B}|=k/4$, $k$ is divisible by 4, and (a),(b) are proved.

Since $x_{q_0}^{-1}$ and $x_{q_0+\nu}^{-1}$ belong in different parts of $\Gamma_0$, either $x_0$ and $x_{q_0}^{-1}$ belong to the same part of $\Gamma_0$ or $x_0$ and $x_{q_0+\nu}^{-1}$ belong to the same part of $\Gamma_0$. In the first case the negative neighbours of $x_0$ are $\{ x_{q_0+\nu}^{-1},x_{q_0+3\nu}^{-1},\ldots ,x_{q_0+(k-1)\nu}^{-1}\}$ so $\mathcal{Q}=\{s+\nu,s+3\nu,\ldots ,s+(k-1)\nu\}$ where $s=q_0$, and in the second case the negative neighbours of $x_0$ are $\{ x_{q_0}^{-1},x_{q_0+2\nu}^{-1},\ldots ,x_{q_0+(k-2)\nu}^{-1}\}$ so $\mathcal{Q}=\{s+\nu,s+3\nu,\ldots ,s+(k-1)\nu\}$ where $s=q_0+\nu$. Also $\mathcal{Q}^+\cap \mathcal{Q}^-=\emptyset$, for otherwise $\Gamma_0$ contains a reduced closed path of length~2, a contradiction. Finally, $\mathrm{gcd}(s,\nu)$ divides $\mathrm{gcd}(n,a\ (a\in\mathcal{A}),b\ (b\in\mathcal{B}), q\ (q \in \mathcal{Q}))=1$, since $P_n(w)$ is irreducible, so $\mathrm{gcd}(s,\nu)=1$, and (c) is proved.

Now suppose that the conditions in the statement hold. By Corollary~\ref{cor:mkalphacharacterisation}(b) we must show that the star graph $\Gamma$ of $P_n(w)$ consists of $\nu$ connected components, each of which is a complete bipartite graph $K_{k,k}$. By Theorem~\ref{thm:components} the graph $\Gamma$ has $\nu$ isomorphic components. Consider the component $\Gamma_0$ whose vertex set is $\{x_0,x_{\nu},\ldots, x_{(k-1)\nu}\}\cup \{x_s^{-1},x_{\nu+s}^{-1},\ldots, x_{(k-1)\nu+s}^{-1}\}$. The set of neighbours of $x_{j\nu}$ ($0\leq j<k$) is the set
\begin{alignat*}{1}
N_\Gamma(x_{j\nu})
&= \begin{cases}
\{ x_{\nu}, x_{3\nu}, \ldots , x_{(k-1)\nu}\} \cup \{ x_{\nu+s}^{-1}, x_{3\nu+s}^{-1}, \ldots , x_{(k-1)\nu+s}^{-1}\}
& \mathrm{if}~j~\mathrm{is~even},\\
\{ x_{0}, x_{2\nu}, \ldots , x_{(k-2)\nu}\} \cup \{ x_{s}^{-1}, x_{2\nu+s}^{-1}, \ldots , x_{(k-2)\nu+s}^{-1}\}
& \mathrm{if}~j~\mathrm{is~odd},
\end{cases}
\end{alignat*}
and so $\Gamma_0$ is bipartite with vertex partition
\begin{alignat*}{1}
&\ \{x_0,x_{2\nu},\ldots ,x_{(k-2)\nu}, x_{s}^{-1},x_{2\nu+s}^{-1},\ldots , x_{(k-2)\nu+s}^{-1}\}\\
&\dot{\cup} \{x_{\nu},x_{3\nu},\ldots ,x_{(k-1)\nu}, x_{\nu+s}^{-1},x_{3\nu+s}^{-1},\ldots , x_{(k-1)\nu+s}^{-1}\}.
\end{alignat*}
Further, for each $0\leq j<k$ the set of neighbours $N_\Gamma(x_{j\nu+s}^{-1})=N_\Gamma(x_{j\nu})$ so $\Gamma_0$ is a complete bipartite graph, as required.
\end{proof}

Note that with $\mathcal{A},\mathcal{B},\mathcal{Q}^+,\mathcal{Q}^-$ as defined at~(\ref{eq:ABQ}) we have
\begin{alignat}{1}
\sum_{a\in\mathcal{A}} a + \sum_{b\in\mathcal{B}} b + \sum_{q\in\mathcal{Q}^+} q - \sum_{q\in\mathcal{Q}^-} q \equiv 0~\mathrm{mod}~n.\label{eq:Qplussummation}
\end{alignat}

Recall (from Section~\ref{sec:specialpresentation}) that if $k>4$ then a group defined by a non-redundant $(2,k,\nu)$-special presentation is SQ-universal.
Restricting to the Euclidean case, we now classify the non-redundant $(2,4,\nu)$-special cyclic presentations $P_n(w)$. By cyclically permuting and taking the inverse of $w$, if necessary, we may assume that either $w=x_0x_p^{-1}x_q^{-1}x_r$ or $w=x_0x_p^{-1}x_qx_r$  for some $0\leq p,q,r<n$.

\begin{corollary}\label{cor:24alphanonposnonalt}
\begin{itemize}
  \item[(a)] Let $P=P_n(x_0x_p^{-1}x_q^{-1}x_r)$ be irreducible and non-redundant and let $G$ be the group defined by $P$. Then $P$ is $(2,4,\nu)$-special if and only if $n=4\nu$, and $(p,q,r)=(\nu,-s,\nu-s)$ or $(3\nu,-s,3\nu-s)$ (mod~$n$) where $\mathrm{gcd}(s,\nu)=1$, in which case $G$ contains a subgroup isomorphic to $F_2\times F_2$ and admits an epimorphism onto the free group of rank $\nu$ so is large if $\nu>1$. Moreover, if $\nu=1$ then one of the following holds:
      \begin{itemize}
        \item[(i)] $G\cong G_4(x_0x_1^{-1}x_0^{-1}x_1)\cong F_2\times F_2$; or
        \item[(ii)] $G\cong G_4(x_0x_1^{-2}x_2)$ which has $F_5\times F_5$ as an index 16 subgroup;
        \item[(iii)] $G\cong G_4(x_0x_1^{-1}x_2^{-1}x_3)$ which has  $F_3\times F_5$ as an index 8 subgroup.
      \end{itemize}

  \item[(b)] Let $P=P_n(x_0x_p^{-1}x_qx_r)$ be irreducible and let $G$ be the group defined by $P$. Then $P$ is $(2,4,\nu)$-special if and only if $\nu=1$, $n=4$, and one of the following holds:
      \begin{itemize}
        \item[(i)] $(p,q,r)\in \{(1,0,1), (3,0,3)\}$, in which case $G\cong G_4(x_0x_1^{-1}x_0x_1)$ and the (index 16) derived subgroup $G'\cong F_5\times F_5$;

        \item[(ii)] $(p,q,r)\in \{(1,0,3), (3,0,1)\}$, in which case $G\cong G_4(x_0x_1^{-1}x_0x_3)$ and $G$ has $F_5\times F_5$ as an index 16 subgroup;

        \item[(iii)] $(p,q,r)\in \{(1,2,0), (3,2,0), (1,2,2), (3,2,2)\}$, in which case $G\cong G_4(x_0^2x_1^{-1}x_2)$.
      \end{itemize}

\end{itemize}
\end{corollary}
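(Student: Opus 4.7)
The plan is to apply Theorem~\ref{thm:2kalphanonrednonpositivenonalternating} with $k=4$ to each of the two shapes of $w$, solve the resulting congruences, reduce the admissible triples to the named canonical cyclic presentations via standard moves on cyclic presentations, and then establish the group-theoretic claims via the free-quotient trick of Corollary~\ref{cor:disconnectedlarge} together with explicit GAP computations for the direct-product subgroup identifications.

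For part~(a), reading off the four length-$2$ cyclic subwords of $w=x_0x_p^{-1}x_q^{-1}x_r$ gives $\mathcal{A}=\{p\}$, $\mathcal{B}=\{r-q\}$, $\mathcal{Q}^+=\{-r\}$, $\mathcal{Q}^-=\{p-q\}$. Theorem~\ref{thm:2kalphanonrednonpositivenonalternating} then forces $n=4\nu$, $p,r-q\in\{\nu,3\nu\}$, $-r\not\equiv p-q$~mod~$n$, and $\{-r,p-q\}=\{s+\nu,s+3\nu\}$ for some $s$ with $\gcd(s,\nu)=1$. Summing the two set elements gives $(-r)+(p-q)\equiv 2s$~mod~$n$, and checking each of the four possibilities for $(p,r-q)\in\{\nu,3\nu\}^2$ shows that only the diagonal choices $p=r-q$ are consistent, which after the substitution $s:=-q$ yield exactly the two stated parameterizations. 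Irreducibility is equivalent to $\gcd(s,\nu)=1$, and non-redundancy follows from the asymmetry of the sign pattern $+,-,-,+$ combined with the distinctness of the subscripts in $w$.

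For part~(b), the analogous computation gives $\mathcal{A}=\{p\}$, $\mathcal{B}=\{q-p\}$, $\mathcal{Q}^+=\{r-q,-r\}$ and $\mathcal{Q}^-=\emptyset$. The theorem then forces $p\in\{\nu,3\nu\}$, $q\in\{0,2\nu\}$, and $\{r-q,-r\}=\{s+\nu,s+3\nu\}$ with $\gcd(s,\nu)=1$. The sum relation $(r-q)+(-r)=-q\equiv 2s$~mod~$4\nu$, combined with $q\in\{0,2\nu\}$, forces $s\equiv 0$ or $s\equiv\nu$~mod~$2\nu$; together with $\gcd(s,\nu)=1$ this collapses to $\nu=1$, $n=4$. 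A direct enumeration in~$\mathbb{Z}_4$ then gives the eight admissible triples, which after cyclic rotation and shift partition into the three sub-cases of the statement.

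The isomorphisms within each sub-case are standard manipulations of the relator---cyclic rotation (which preserves the normal closure), the shift~$\theta^j$, multiplication of subscripts by a unit of $\mathbb{Z}_n$, inversion $w\mapsto w^{-1}$, and inversion of the generators $x_i\mapsto x_i^{-1}$---following the template of Corollaries~\ref{cor:24alphapositive} and~\ref{cor:24alphaalternating}. The epimorphism $G\twoheadrightarrow F_\nu$ in part~(a) is obtained exactly as in the proof of Corollary~\ref{cor:disconnectedlarge}: adjoining the relators $x_ix_{i+\nu}^{-1}$ ($0\leq i<n$) kills every $\theta^j(w)$, leaving the free group on $x_0,\ldots,x_{\nu-1}$, so $G$ is large when $\nu>1$. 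The sub-case $\nu=1$, $s=0$ of part~(a) is immediate because the relator is the commutator $[x_0,x_1^{-1}]$ and its four shift-images directly present $F_2\times F_2$ on the rank-$2$ free factors $\langle x_0,x_2\rangle$ and $\langle x_1,x_3\rangle$. The remaining identifications---the $F_2\times F_2$ subgroup for $\nu>1$ in part~(a), and the $F_5\times F_5$ of index~$16$ and $F_3\times F_5$ of index~$8$ in the other named groups---are the main obstacle, and will be verified in the template of the proof of Corollary~\ref{cor:24alphapositive} by producing in GAP an explicit finite generating set whose induced subgroup presentation reduces, after killing the obviously redundant generators, to the standard presentation of the claimed direct product.
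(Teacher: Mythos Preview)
Your classification arguments in both parts are correct and essentially match the paper's proof: the application of Theorem~\ref{thm:2kalphanonrednonpositivenonalternating}, the elimination of the off-diagonal cases in~(a), the sum relation forcing $\nu=1$ in~(b), and the reduction to canonical forms via standard moves all parallel the paper. The epimorphism $G\twoheadrightarrow F_\nu$ and the $\nu=1$, $s=0$ identification with $F_2\times F_2$ are likewise fine, and the GAP verifications for the finitely many named groups in (a)(ii), (a)(iii), (b)(i), (b)(ii) are exactly what the paper does.

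There is, however, a genuine gap in your plan for the $F_2\times F_2$ subgroup when $\nu>1$ in part~(a). This is an \emph{infinite} family of groups, indexed by $\nu\geq 2$ and $s$ with $\gcd(s,\nu)=1$, so a GAP computation in the style of Corollary~\ref{cor:24alphapositive} cannot cover them all; you need a uniform argument. The paper supplies one via the shift extension: writing $E=G\rtimes_\theta\Z_n=\langle x,t\mid t^{4\nu},\, xt^{\nu}x^{-1}t^{-(\nu+s)}x^{-1}t^{\nu}xt^{s-\nu}\rangle$, the group $G$ is the kernel of the retraction $\phi_0\colon E\to\Z_n$ sending $x\mapsto 1$. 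A \emph{different} retraction $\phi_{-s}$ sending $x\mapsto t^{-s}$ has kernel the cyclically presented group $G_{4\nu}(y_0y_\nu^{-1}y_0^{-1}y_\nu)$ (with $y_i=t^ixt^{-i+s}$), which is a free product of $\nu$ copies of $F_2\times F_2$; in particular $\langle y_0,y_\nu,y_{2\nu},y_{3\nu}\rangle\cong F_2\times F_2$. Taking $n$-th powers of these generators gives a copy of $F_2\times F_2$ lying in $\ker\phi_0=G$, since $\phi_0(y_i^n)=1$. This retraction trick (cf.\ \cite[Theorem~2.3]{BogleyShift} and the remark after the proof citing \cite[Example~3(b)]{BW2}) is the missing idea; without it your proposal does not establish the $F_2\times F_2$ claim for general~$\nu$.
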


\begin{proof}
(a) With the notation~(\ref{eq:ABQ}) we have $\mathcal{A}=\{p\}$, $\mathcal{B}=\{r-q\}$, $\mathcal{Q}^+=\{-r\}$, $\mathcal{Q}^-=\{p-q\}$. Then by Theorem~\ref{thm:2kalphanonrednonpositivenonalternating} the presentation $P$ is $(2,k,\nu)$-special if and only if $n=4\nu$, $p=r-q\in\{\nu,3\nu\}$, $\{p-q,-p-q\}= \{s+\nu, s+3\nu\}$ for some $\mathrm{gcd}(s,\nu)=1$. That is, $(p,q,r)=(\nu,-s,\nu-s)$, $(3\nu,-s,3\nu-s)$, $(\nu,2\nu-s,3\nu-s)$, $(3\nu,2\nu-s,\nu-s)$. Replacing $s$ by $2\nu+s$ in the last two cases transforms them to the first cases. If $\nu=1$ then then $\pm (p,q,r)=(1,0,1),(1,1,2),(1,2,3),(1,3,0)$ (mod~$n$) and $G$ is isomorphic to one of the stated groups. A computation in GAP reveals the subgroups claimed.
Suppose then that $\nu>1$. Then $G$ maps onto $G_\nu(x_0x_p^{-1}x_q^{-1}x_r)$, which is free of rank $\nu$, so $G$ is large. Moreover, the shift extension of $G$ is the group $E=\pres{x,t}{t^{4\nu}, xt^{\nu}x^{-1}t^{-(\nu+s)}x^{-1}t^\nu xt^{s-\nu}}$ (by replacing $s$ by $-s$, if necessary). Therefore $G$ is the kernel the epimorphism $\phi_0:E\rightarrow \pres{t}{t^8}$ given by $\phi_0(t)=t$, $\phi(x)=1$. On the other hand, the kernel of the epimorphism  $\phi_{-s}:E\rightarrow \pres{t}{t^8}$ given by $\phi_0(t)=t$, $\phi(x)=t^{-s}$  is the cyclically presented group $G_{4\nu}(y_0y_{\nu}^{-1}y_0^{-1}y_\nu)$, where $y_i=t^{i}xt^{-i+s}$, which is isomorphic to the free product of $\nu$ copies of $G_4(y_0y_1^{-1}y_0^{-1}y_1)\cong F_2\times F_2$. In particular, the subgroup of $\ker (\phi_{-s})$ generated by $y_0,y_\nu,y_{2\nu},y_{3\nu}$ is the group
\begin{alignat*}{1}
H&=\pres{y_0,y_\nu,y_{2\nu},y_{3\nu}}{y_0y_\nu^{-1}y_{0}^{-1}y_{\nu},y_\nu y_{2\nu}^{-1}y_{\nu}^{-1}y_{2\nu},y_{2\nu}y_{3\nu}^{-1}y_{2\nu}^{-1}y_{3\nu},y_{3\nu}y_0^{-1}y_{3\nu}^{-1}y_{0}}\\
&=\pres{y_0,y_{2\mu}}{} \times \pres{y_{\nu},y_{3\mu}}{}\cong F_2\times F_2.
\end{alignat*}
Therefore the subgroup $K$ of $H$ generated by $y_0^n,y_\nu^n,y_{2\nu}^n,y_{3\nu}^n$ is isomorphic to $F_2\times F_2$, and since $\phi_0(y_i^n)=1$, we have $K$ is a subgroup of $\ker (\phi_0)=G$, as required.

(b) By Theorem~\ref{thm:2kalphanonrednonpositivenonalternating} $P$ is $(2,k,\nu)$-special if and only if $n=4\nu$, $\mathcal{A}=\{p\}$, $\mathcal{B}=\{q-p\}$, $\mathcal{Q}^-=\emptyset$, $\mathcal{Q}^+=\{r-q,-r \}=\{s+\nu,s+3\nu\}$, for some $0\leq s<n$ and $\mathrm{gcd}(s,\nu)=1$, and $\mathcal{A} \cup \mathcal{B}\subset\{\nu,3\nu\}$, so $p\equiv q-p$ or $p\equiv-(q-p)$~mod~$n$.

Suppose $p\equiv -(q-p)$~mod~$n$; then~(\ref{eq:Qplussummation}) implies $2s\equiv 0$~mod~$n$ so $s\equiv 0$ or $2\nu$~mod~$n$, and then $\mathrm{gcd}(s,\nu)=1$ implies $\nu=1$, so $n=4$ and $s\equiv 0$ or $s\equiv 2$~mod~$4$. Then (mod~$4$) $p\in\{1,3\}$, $\{r-q,-r\}=\{1,3\}$, which has solutions $(p,q,r)=(1,0,1),(1,0,3),(3,0,1),(3,0,3)$, as in parts (i),(ii). Computations in GAP reveal the $F_5\times F_5$ subgroups.
Suppose $p\equiv q-p$~mod~$n$; then~(\ref{eq:Qplussummation}) implies $2(s+\nu)\equiv 0$~mod~$n$ so $s\equiv \nu$ or $3\nu$~mod~$n$, and then $\mathrm{gcd}(s,\nu)=1$ implies that $\nu=1$, so $n=4$ and $s\equiv 1$ or $s\equiv 3$~mod~$4$. Then (mod~$4$) $p=q-p\in \{1,3\}$ and $\{r-q,-r\}=\{0,2\}$, the solutions of which are $(p,q,r)=(1,2,2),(1,2,0),(3,2,2),(3,2,0)$, in which case $G\cong G_4(x_0^2x_1^{-1}x_2)$, as in part~(iii).
\end{proof}

The argument in the proof above for the existence of the $F_2\times F_2$ subgroup in the groups arising in part~(a) has its origins in~\cite[Example~3(b)]{BW2}.

We now determine which groups from Corollary~\ref{cor:24alphanonposnonalt} are Burger-Mozes groups, as defined in~\cite{KimberleyRobertson}, whose notation $2\times 2.j$ we use. Since these groups have deficiency at least zero, if they are Burger-Mozes groups then they have degree (4,4), by~\cite[Proposition~4.26]{RattaggiThesis}; the Burger-Mozes groups of degree~(4,4) are classified in the Table in~\cite{KimberleyRobertson}.
Consider first the case $\nu>1$, so $G$ is a group from part~(a). If $\nu>3$ then $G$ maps onto the free group of rank 4 so $G^\mathrm{ab}$ maps onto $\Z^4$, but the only group from the Table in~\cite{KimberleyRobertson} whose abelianisation maps onto $\Z^4$ is the group $2\times 2.41\cong F_2\times F_2$, which does not map onto the free group of rank 4. When $\nu=3$, a computation in GAP shows that the abelianisation $G^\mathrm{ab}$ is distinct from the abelianisations of the groups in~\cite{KimberleyRobertson}. When $\nu=2$, a comparison of abelianisations shows that if $G$ is isomorphic to a group $H$ from the Table in~\cite{KimberleyRobertson} then $H$ is the group $2\times 2.32$; but then $G$ can be distinguished from $H$ by comparing abelianisations of index~2 subgroups. The groups in parts~(a)(ii) and (b)(iii) can be distinguished from the groups in the Table in~\cite{KimberleyRobertson} by considering their abelianisations, or the abelianisations of their index 2 subgroups. The groups in parts (a)(i),(a)(iii),(b)(i),(b)(ii) are the Burger-Mozes groups $2\times 2.41\cong F_2\times F_2$, $2\times 2.51$, $2\times 2.12$, $2\times 2.36$, respectively of~\cite{KimberleyRobertson}. Thus, if a Burger-Mozes group is defined by a $(2,4,\nu)$-special presentation, then it is one of these four groups.

The results of Sections~\ref{sec:nonredundant3kalpha},\ref{sec:nonredundant2kalphapositive},\ref{sec:nonredundant2kalphaalternating} and Corollary~\ref{cor:24alphanonposnonalt} show that there are at most two groups defined by non-redundant $(m,k,\nu)$-special cyclic presentations that are not SQ-universal, namely $G_7(x_0x_1x_3)$ (which is not SQ-universal) and $G\cong G_4(x_0x_1^{-1}x_2x_0)$ (which remains unresolved).

\section*{Acknowledgement}

The authors thank Jim Howie for comments on a draft of this article.

  \textsc{Department of Mathematical Sciences, University of Essex, Wivenhoe Park, Colchester, Essex CO4 3SQ, UK.}\par\nopagebreak
  \textit{E-mail address}, \texttt{Ihechukwu.Chinyere@essex.ac.uk}

  \textsc{Department of Mathematical Sciences, University of Essex, Wivenhoe Park, Colchester, Essex CO4 3SQ, UK.}\par\nopagebreak
  \textit{E-mail address}, \texttt{Gerald.Williams@essex.ac.uk}
\end{document}